\newcommand{\s}{\sigma}
\title{On groups and fields interpretable in torsion-free hyperbolic groups\thanks{Research supported by SFB 878}}
\date{\today}
\author{Chlo\'e Perin \\University of Strasbourg\and Anand Pillay\thanks{Supported by EPSRC grant EP/I0002294/1}\\University of Leeds\and 
Rizos Sklinos\\Hebrew University of Jerusalem\and Katrin Tent\\Universit\"at M\"unster} 
\begin{document}

\maketitle

\begin{abstract} We prove that the generic type of a non-cyclic torsion-free hyperbolic group $G$ is 
foreign to any interpretable abelian group, hence also to any interpretable field. 
This result depends, among other things, on the definable simplicity of a non-cyclic torsion-free hyperbolic group, 
and we take the opportunity to give a proof of the latter using Sela's description of imaginaries in torsion-free hyperbolic groups. We also use the 
description of imaginaries to prove that if $\F$ is a free group of rank $>2$ then no orbit of a finite tuple from $\F$ under $Aut(\F)$ is definable.
\end{abstract}

\section{Introduction} 

This paper concerns the first order theories of torsion-free hyperbolic groups. There is an increasing 
model theoretic interest in the subject motivated by the positive solution to Tarski's problem (i.e. is the theory of non abelian free groups complete?) 
by Sela and Kharlampovich-Myasnikov. 
Subsequently Sela proved the stability of
all non-cyclic torsion-free hyperbolic groups \cite{SelaStability}. These are in fact remarkable examples of ``new
stable groups'', given by nature.

If we fix a torsion-free hyperbolic group $G$, then understanding the category of 
definable/interpretable sets in models of $Th(G)$, 
informed by stability-theoretic tools and notions, is a challenge. 
Sela's work on imaginaries in torsion-free hyperbolic groups \cite{SelaIm} is 
part of this endeavour and will be used in the current paper. 

Our paper contributes to the following conjecture:

\begin{conjecture}\label{Field}
Let $G$ be a torsion-free hyperbolic group. Then no infinite field is interpretable in (any model of) $Th(G)$.
\end{conjecture}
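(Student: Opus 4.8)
\medskip
\noindent\textbf{Proof proposal.}
We may assume $G$ is non-cyclic, since $\mathrm{Th}(\Z)$ is Presburger arithmetic, which is well known to interpret no infinite field. I would then attack Conjecture~\ref{Field} through the model-theoretic notion of \emph{foreignness}, the realistic first target being the statement that drives everything: the \emph{generic type} of $G$ is foreign to every interpretable abelian group. Here the generic type is the unique generic $1$-type $p$ over a model $M$; it exists and is stationary because $\mathrm{Th}(G)$ is stable and $G$ is connected as a definable group, i.e.\ has no proper definable subgroup of finite index (otherwise replace $G$ by $G^0$ throughout). Granting foreignness to abelian groups, an infinite interpretable field $K$ carries the infinite interpretable abelian group $(K,+)$, whose underlying set is precisely that of $K$; since foreignness is an independence statement about imaginaries and depends only on the underlying set and the ambient $\mathrm{Th}(G)^{\mathrm{eq}}$-structure, not on the ring operations, $p$ is then foreign to $K$ as well. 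This is what I would prove first, and I discuss the remaining distance to the conjecture at the end.

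To prove that $p$ is foreign to an interpretable abelian group $A$, I would argue by contradiction: suppose there are a model $M$ (carrying the parameters defining $A$), a realization $a$ of $p$ over $M$, and a finite tuple $\bar b$ from $A$ such that $\tp(\bar b/Ma)$ forks over $M$. Stability then yields a non-algebraic-over-$M$ imaginary $e$ — a piece of the canonical base of $\tp(\bar b/Ma)$ — with $e\in\mathrm{acl}^{\mathrm{eq}}(Ma)$ and $e$ internal to $A$. Feeding this back into the group structure of $G$, together with connectedness and stationarity of $p$, one should be able to upgrade the resulting type-definable correspondence between a nonalgebraic part of the generic of $G$ and part of $A$ to a genuine definable homomorphism from $G$, or from a definable quotient of $G$, onto an infinite abelian-by-finite group. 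This contradicts the definable simplicity of the non-cyclic torsion-free hyperbolic group $G$: its only definable normal subgroups, in any model of $\mathrm{Th}(G)$, are $1$ and $G$, and $G$ itself is very far from abelian-by-finite.

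The point where Sela's description of imaginaries in torsion-free hyperbolic groups is indispensable — and, I expect, the main obstacle — is exactly the passage from ``definable'' to ``interpretable'', i.e.\ running the above argument inside $G^{\mathrm{eq}}$ rather than just in $G$. Sela's classification says that any interpretable set, in particular $A$ and the imaginary $e$, is coordinatized by tuples from $G$ together with cosets of the cyclic (hence abelian) subgroups arising from the cyclic/JSJ splittings of $G$. This is what pins down what ``$e$ internal to $A$'' can mean, lets the canonical-base and definable-homomorphism computations be carried out, and also underlies the proof of definable simplicity itself. The remaining work is stability-theoretic bookkeeping in a setting that is stable but not superstable, so the usual finite-Morley-rank shortcuts are unavailable: identifying $p$, checking connectedness, and handling forking and the stabilizer $\Stab(p)$ by hand.

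Finally, I should be honest that foreignness of $p$ to every interpretable field is strong evidence for, but not yet a proof of, Conjecture~\ref{Field}: a priori an infinite field could be interpreted in $\mathrm{Th}(G)$ ``orthogonally'' to the generic type. Closing that last gap would need an additional ingredient — for instance a structural dichotomy showing that any interpretable infinite group, being a definable quotient of some $G^n$, must fail to be foreign to $p$, or a direct argument that an interpretable field forces a configuration visible to the generic type. I expect that final mile, rather than the foreignness theorem, to be the genuine bottleneck, so the honest deliverable of this approach is the foreignness statement.
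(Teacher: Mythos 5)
First, a framing point: the statement you were given is an open conjecture in the paper; the authors prove only the partial result that the generic type $p^G_0$ is foreign to every interpretable abelian group (hence to every interpretable field), and you correctly identify that partial result as the realistic deliverable and correctly note the remaining gap to the full conjecture (an infinite field could a priori be interpreted orthogonally to $p^G_0$). Your reduction from fields to abelian groups via $(K,+)$ is also exactly the paper's Corollary on skew fields. The problem is in your proposed proof of the foreignness statement itself.

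The gap is the step where you ``upgrade the resulting type-definable correspondence \ldots\ to a genuine definable homomorphism from $G$, or from a definable quotient of $G$, onto an infinite abelian-by-finite group,'' to be killed by definable simplicity. What Hrushovski's lemma actually gives from ``$p^G_0$ not foreign to $A$'' is a definable normal subgroup $N$ with the generic of $G/N$ \emph{internal} to $A$; definable simplicity forces $N=1$, so the conclusion is that $p^G_0$ itself is internal to $A$ --- and internality of the generic to an abelian group does \emph{not} yield a homomorphism onto an abelian-by-finite group (in $\mathrm{ACF}$ the generic of a simple algebraic group is internal to the additive group of the field, yet there is no such homomorphism). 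So definable simplicity alone cannot finish the argument; it only sets up the internality. The paper's actual contradiction comes from a separate, bespoke ingredient with no counterpart in your sketch: Proposition \ref{LT} (and its analogue \ref{HLT}), which says that a definable set $X$ over $\F_n^{eq}$ with $X(\F_{n+1}^{eq})\neq X(\F_n^{eq})$ cannot carry a definable abelian group structure. Internality gives $e\in dcl(\F_2, d_1,\dots,d_n)$ with the $d_i$ in the abelian group and $e$ a new free generator, forcing some $d_i$ to live in $\F_3^{eq}\setminus\F_2^{eq}$, i.e.\ the abelian group ``gains'' a point --- contradiction. Proposition \ref{LT} is proved not by stability-theoretic bookkeeping but by a combinatorial argument: one takes prime-order automorphisms cyclically permuting fresh free factors, averages a gained element over such an orbit using the abelian operation to produce a fixed imaginary spread over many free factors, and then uses Sela's classification of the basic sorts (conjugacy classes and cosets of centralizers) to show no such fixed imaginary can exist. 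Without this non-gaining lemma, or something doing equivalent work, your argument does not close.
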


In general, also the nature and complexity of interpretable groups in a theory is important and we make another 
conjecture which will only indirectly be touched on in the current paper:

\begin{conjecture}  
Let $G$ be a torsion-free hyperbolic group. Any group interpretable in a model $\mathcal{M}$ of $Th(G)$ is 
definably isomorphic to a definable subgroup of $\mathcal{M}\times\ldots\times \mathcal{M}$.
\end{conjecture}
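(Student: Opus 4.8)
\emph{A proof proposal.} The plan is to use Sela's analysis of imaginaries \cite{SelaIm} together with the stability of $Th(G)$ to reduce the conjecture, in two steps, to a concrete question about definable groups. By \cite{SelaIm}, $Th(G)$ enjoys (roughly) a geometric elimination of imaginaries: every imaginary is interalgebraic with a tuple lying in a bounded product of the home sort $\mathcal{M}$ and finitely many ``finite-set'' sorts (unordered tuples from powers of $\mathcal{M}$ of bounded size), and the finite-set code attached to a real set $X$ is governed by a subgroup of the modular (mapping-class-type) automorphism group of the cyclic JSJ decomposition of $X$, an object internal to $\mathcal{M}$. So the conjecture splits into: \textbf{(a)} show that an interpretable group $H$ is in fact \emph{definable}, i.e. that the finite-set ambiguity in the codes of its elements can be removed; and \textbf{(b)} show that a definable group embeds definably into some $\mathcal{M}^N$ with the coordinatewise operation. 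One should note that the statement is meaningful only for torsion-free $H$ — any finite definable set already carries a definable group law — so the real content lies in step (b), and that is the case I would address.

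For \textbf{(a)}: present the universe of $H$ as a definable family $\mathcal{F}$ of finite subsets of some $\mathcal{M}^n$, and form the covering set $\widetilde{H}=\{(a,f):f\in\mathcal{F},\ a\in f\}$, a definable set in the home sort carrying a definable finite-to-one surjection $\pi:\widetilde{H}\to H$. The task is to put a definable group structure on $\widetilde{H}$ making $\pi$ a homomorphism with finite kernel, and then to split $\pi$; stability (the chain condition on definable subgroups, generic types, stabilizers) combined with the geometric content of \cite{SelaIm} — that the finite sets in $\mathcal{F}$ are modular-automorphism orbits internal to $\mathcal{M}$ — should allow one to trivialize the cover generically and then everywhere, so that $H$ itself is definable in the home sort.

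For \textbf{(b)}: assume now that $H$ has universe a definable $X\subseteq\mathcal{M}^n$ with definable multiplication, and is torsion-free. The idea is to let $H$ act on $\mathcal{M}$ via its regular representation and read off the action coordinatewise. By the definable simplicity of $\mathcal{M}$ — proved in this paper from \cite{SelaIm} — any definable map $X\to\mathcal{M}$ intertwining the $H$-action with left multiplication on $\mathcal{M}$ is either trivial or injective on the connected component; and the foreignness of the generic type to any interpretable field (which follows from this paper's main theorem) is what should exclude genuinely ``linear'' behaviour of the action on generic tuples. Assembling enough such maps, one expects to realize $H$ as built from translations, hence as a definable subgroup of $\mathcal{M}^N$; the foreignness of the generic type to interpretable abelian groups should then be precisely what forces the abelian constituents of $H$ into products of the definable cyclic subgroups of $\mathcal{M}$.

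The main obstacle is step \textbf{(a)}: the passage from ``interpretable'' to ``definable'' is not formal — stable groups can fail to be definable — and carrying it out requires a version of \cite{SelaIm} describing the imaginaries carried by an entire \emph{orbit} rather than a single tuple, together with a vanishing statement for the associated finite covers, i.e. a Galois-cohomology-type input. Step \textbf{(b)}, while more tractable, needs a classification of definable group laws on definable subsets of $\mathcal{M}^n$, which is at present open; here the techniques of this paper — definable simplicity and foreignness to abelian groups — should be the main ingredients. In this sense the conjecture is really a program, to be attacked once Conjecture \ref{Field} and the finer structure of interpretable groups are better understood.
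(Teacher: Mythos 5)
The statement you were asked to prove is the second conjecture of the paper's introduction. The authors explicitly present it as open and say it ``will only indirectly be touched on in the current paper'': the paper contains no proof of it, and your submission --- which you yourself end by calling ``really a program'' --- does not supply one. You are right that it is not currently provable, and right to flag that, as literally stated, the conjecture needs to be read as excluding nontrivial finite interpretable groups (which exist, with parameters, yet cannot embed in the torsion-free group $\mathcal{M}\times\ldots\times\mathcal{M}$). So the honest verdict is: no proof exists here to compare yours against, and yours is not a proof.

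Even as a program, though, your step (a) rests on a mischaracterization of Sela's result. Theorems \ref{Elim} and \ref{0Elim}, as used throughout the paper, do not reduce imaginaries to ``finite-set'' sorts over the home sort; they give \emph{weak} elimination of imaginaries relative to the \emph{basic sorts} of Definition \ref{BasicSorts} --- quotients of $G$, $G^2$, $G^3$ by conjugation, by $m$-coset equivalence relative to centralizers, and by double-coset equivalence. Each element of $\mathbb{G}^{eq}$ is interalgebraic with a finite tuple \emph{of elements of these sorts}, and the classes in question (conjugacy classes, cosets of infinite cyclic centralizers) are infinite. Consequently there is in general no definable family $\mathcal{F}$ of finite subsets of $\mathcal{M}^n$ presenting the universe of an interpretable group $H$, no finite cover $\widetilde{H}\to H$ to split, and no reduction of the conjecture to ``definable subgroups of $\mathcal{M}^n$'': the genuine difficulty is to handle groups whose universe lives in the sorts $S_1, S_{2_m}, S_{3_m}, S_{4_{m,n}}$, which neither your (a) nor your (b) addresses. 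In step (b), the asserted dichotomy that an $H$-equivariant definable map $X\to\mathcal{M}$ is ``either trivial or injective on the connected component'' does not follow from definable simplicity (Corollary \ref{DS} constrains normal definable subgroups of $\mathcal{M}$, not definable maps out of $X$), and the appeal to Theorem \ref{For} to ``exclude linear behaviour'' is not an argument. These are the concrete gaps that would have to be filled before the conjecture could be attacked along your lines.
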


Note that this is basically the situation in $1$-based groups (see \cite{1BEPP}).

Let $G$ be a non-cyclic torsion-free hyperbolic group. Then the first order theory of $G$ 
is connected and hence has a unique generic type, which we call $p^{G}_{0}$ (see section \ref{Ima}). 
In the special case of non abelian free groups a considerable amount of information has been obtained 
around the complexity of $p^{\F_n}_{0}$ (by all the authors). 
For example $p^{\F_n}_{0}$ has infinite weight (\cite{PillayGenericity}, \cite{SklinosGenericType}), 
and witnesses the fact that the free group is $n$-ample for all $n$ (\cite{AmpleSk}). 
As a matter of fact, in \cite{AmpleOK} it was proved that the theory of any (non-cyclic) torsion-free 
hyperbolic group is $n$-ample for all $n$. 
The ampleness result is consistent with the existence of an infinite interpretable field which interacts with $p^G_{0}$. 
Corollary \ref{Skew} rules this out. Thus, the results of the current paper provide a partial solution to Conjecture \ref{Field}, 
yielding additional information on $p^G_{0}$: no interpretable abelian group (hence also interpretable field) can interact with $p^G_{0}$. 
See Section \ref{Generic} for the stability-theoretic definitions.

\begin{thmIntro}\label{For}
Let $G$ be a non-cyclic torsion-free hyperbolic group. Then $p^G_{0}$ is foreign to any interpretable abelian group.
\end{thmIntro}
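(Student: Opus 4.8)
The plan is to argue by contradiction: suppose $p^G_0$ is \emph{not} foreign to some interpretable abelian group $A$. By the standard characterization of foreignness in a stable theory, this means that after some base change there is a realization $a \models p^G_0$ and a tuple $b$ from $A$ such that $a \not\mathpalette\notind{}_{} b$, and in fact one can extract from this a nontrivial \emph{definable} (over parameters) function, or rather a type-definable relation, linking the generic type of $G$ to $A$. The cleanest route is to pass to the pointwise stabilizer / binding group picture: non-foreignness of a stationary type $p$ to a definable group $A$ yields, via Hrushovski's analysis, a nontrivial type-definable action, and one wants to leverage this together with the \emph{definable simplicity} of $G$ (proved earlier in the paper using Sela's description of imaginaries). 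The key tension to exploit is that $A$ is abelian while the generic type of $G$ is, in a strong sense, as far from abelian as possible.

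First I would set up the contradiction concretely: if $p^G_0$ is not foreign to $A$, then there is a finite tuple $\bar{c}$ and $a\models p^G_0|\bar c$ with $a\not\mathpalette\notind{}_{\bar c} \bar e$ for some $\bar e$ a tuple of elements of (a model's copy of) $A$, while additionally $\bar e\in\mathrm{acl}(a\bar c)\setminus\mathrm{acl}(\bar c)$ can be arranged — i.e.\ $a$ computes a new element of the abelian group $A$. Then I would look at the map sending generic $a$ to this element of $A$: using that $\mathrm{Th}(G)$ is connected with unique generic type, a generic element's coset/image under such a correspondence should be controlled by the group structure. The point is to produce from this a nontrivial definable homomorphism-like object out of (a piece built from) $G$ into the abelian group $A$, contradicting definable simplicity — concretely, the centralizer/commutator structure that definable simplicity rules out would have to appear.

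The second main step is to use Sela's description of imaginaries in torsion-free hyperbolic groups to control $A$: any interpretable abelian group in $\mathrm{Th}(G)$ is, up to the equivalences provided by that description, essentially built from the ``geometric'' imaginaries classified by Sela, and one argues that the generic type cannot be non-orthogonal (let alone non-foreign) to any of these building blocks. This is where one would invoke whatever structural consequences the paper has already extracted from \cite{SelaIm} — for instance that the relevant imaginary sorts are internal to configurations that are themselves controlled by (real) tuples on which the generic type is ``free'', so that foreignness fails only if the generic type interacts with $G$ itself in an abelian way, which definable simplicity forbids.

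I expect the main obstacle to be the passage from the purely model-theoretic statement ``not foreign'' to an algebraically usable object — i.e.\ extracting a genuine definable homomorphism or a definable abelian quotient of a definable subgroup of $G$ (or of a Cartesian power of $G$), rather than just a type-definable correspondence. One has to be careful that the binding-group / internality machinery produces something \emph{definable} in the home sort, so that definable simplicity (a statement about definable subgroups and definable normal subgroups of $G$) can actually be applied; bridging that gap, possibly via elimination of imaginaries down to the right sorts using Sela's results, and handling the fact that $A$ lives in $G^{\mathrm{eq}}$ rather than in $G$, is the delicate part. A secondary difficulty is that $p^G_0$ has infinite weight, so one must avoid any argument that implicitly assumes finite-weight phenomena (e.g.\ regularity of the generic type); the foreignness statement must be proved directly from connectedness, definable simplicity, and the imaginaries classification rather than from a dichotomy theorem.
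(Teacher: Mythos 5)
Your opening move is the same as the paper's: assume $p^G_0$ is not foreign to an interpretable abelian group $A$, apply Hrushovski's result (Fact \ref{Inter}) together with definable simplicity (Corollary \ref{DS}) to conclude that $p^G_0$ itself (not just a proper quotient's generic) is \emph{internal} to $A$, i.e.\ some realization $b$ of $p^G_0|\mathcal{M}$ satisfies $b\in dcl(\mathcal{M},c_1,\dots,c_n)$ with the $c_i$ in $A$. But from that point on your proposal diverges and has a genuine gap. You try to close the argument by extracting a ``definable homomorphism-like object'' from $G$ into $A$ and contradicting definable simplicity again; you yourself flag that turning the internality data into an actual definable homomorphism is ``the delicate part'', and indeed there is no reason the dcl-relation $b=f(c_1,\dots,c_n,m)$ should be a homomorphism or produce a definable normal subgroup of $G$. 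Definable simplicity is used in the paper only once, at the first step, and the final contradiction comes from somewhere else entirely.

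The missing mechanism is this. Since $p^G_0$ is the generic of a connected group, it is definable over $\emptyset$, so the formula asserting ``there exist $y_1,\dots,y_n$ in the abelian group $\psi(y,m)$ with $x=f(y_1,\dots,y_n,m)$'' can be reflected down to a parameter $m'$ in $\F_2$ (resp.\ $EC(G)$) and then evaluated at the concrete generic realization $e$ over $\F_2$ inside $\F_3=\F_2*\langle e\rangle$ (resp.\ $EC(G)*\langle e\rangle$), using that $tp(e/\F_2)$ is generic. Since $e\notin\F_2$ but $e\in dcl(\F_2,d_1,\dots,d_n)$ with the $d_i$ in the abelian group $B$ defined over $\F_2$, some $d_i$ lies in $B(\F_3^{eq})\setminus\F_2^{eq}$: the definable set $B$ ``gains'' an element when passing to the bigger free factor. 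The heart of the proof is then Proposition \ref{LT} (resp.\ \ref{HLT}): no set definable over $\F_n^{eq}$ that gains an element in $\F_{n+1}^{eq}$ can carry a definable abelian group structure. That proposition is where Sela's classification of imaginaries actually enters, via a combinatorial argument: one averages the new element over the orbit of a prime-order automorphism $f$ permuting a fresh basis ($\beta\odot f(\beta)\odot\cdots\odot f^{p-1}(\beta)$ is $f$-fixed because the group is abelian), and then shows using Lemmas \ref{conj}, \ref{conj2} and \ref{basic} that the finitely many solutions of Sela's relation $R_E$ cannot all be fixed. Your sketch gestures at ``the generic type cannot interact with Sela's building blocks'' but supplies neither the reduction to the concrete free-product situation via definability of the generic type, nor the abelian-averaging obstruction, so the proof does not go through as proposed.
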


\begin{corIntro}\label{Skew} 
Let $G$ be a non-cyclic torsion-free hyperbolic group. Then $p^G_{0}$ is foreign to any interpretable field or skew field.
\end{corIntro}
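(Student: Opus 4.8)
The plan is to obtain Corollary~\ref{Skew} as an immediate formal consequence of Theorem~\ref{For}. The observation driving the argument is that foreignness of $p^G_0$ to an interpretable set $D$ is a property of $D$ as an interpretable set inside a model of $Th(G)$ --- it refers only to tuples of elements of $D$ and to the forking-independence relation of $Th(G)$ --- and is therefore insensitive to whatever additional definable structure (a multiplication, say) one chooses to impose on $D$. Consequently, if $K$ is a field, or more generally a skew field, interpretable in some model $\mathcal{M}$ of $Th(G)$, then its additive group $(K,+)$ is an interpretable abelian group on exactly the same underlying set, and the assertions ``$p^G_0$ is foreign to $(K,+)$'' and ``$p^G_0$ is foreign to $K$'' are literally the same.

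Carrying this out: given an interpretable skew field $K$, one passes to the interpretable abelian group $(K,+)$ and applies Theorem~\ref{For} to get that $p^G_0$ is foreign to $(K,+)$. That this already yields foreignness to $K$ can be seen contrapositively from the definition recalled in Section~\ref{Generic}: were $p^G_0$ not foreign to $K$, there would be a model $M$ over which $p^G_0$ is based, a realization $a$ of the nonforking extension of $p^G_0$ to some $B \supseteq M$, and a finite tuple $\bar c$ of elements of $K$ with $a$ not independent from $\bar c$ over $B$; but $\bar c$ is then equally a tuple of elements of $(K,+)$, witnessing that $p^G_0$ is not foreign to $(K,+)$, a contradiction. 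Note that $(K,+)$ is an abelian group whether or not $K$ is finite or non-commutative, so no appeal to Wedderburn's theorem or to any infiniteness hypothesis is required, and Theorem~\ref{For} applies verbatim.

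Since the reduction is purely formal, there is no genuine obstacle in the proof of the corollary itself; the entire content lies in Theorem~\ref{For}. The single point requiring minor care is to match the precise formulation of foreignness adopted in Section~\ref{Generic}: if ``foreign to an interpretable group $H$'' is there phrased through the generic type of (the connected component of) $H$ rather than through the underlying interpretable set of $H$, one adds the routine remark that foreignness is unaffected by this reformulation, since the relevant generic type and the interpretable set $K$ determine each other up to nonforking.
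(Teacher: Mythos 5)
Your proof is correct and matches the paper's (implicit) argument: the corollary is obtained from Theorem \ref{For} exactly by observing that the additive group $(K,+)$ of an interpretable (skew) field $K$ is an interpretable abelian group on the same underlying definable set, and foreignness in the sense of the definition in Section \ref{Generic} depends only on that underlying set. The paper treats this as immediate and offers no further proof, so your careful spelling-out of the contrapositive is, if anything, more detail than the authors provide.
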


We will need to know that non-cyclic torsion-free hyperbolic groups are definably simple (no definable proper nontrivial normal subgroup). 
The stronger result
that the only proper definable subgroups of torsion-free hyperbolic groups are cyclic, 
has been stated in several places, such as \cite{MalcevKM}. We take the opportunity here to give an independent proof in Section \ref{NDR}, 
using Sela's description of imaginaries \cite{SelaIm} as a ``black box'':

\begin{thmIntro} \label{Mal}
The only definable proper subgroups of a torsion-free hyperbolic group are cyclic.
\end{thmIntro}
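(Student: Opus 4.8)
The plan is to argue by contradiction: assume $H$ is a definable proper subgroup of the torsion-free hyperbolic group $G$ which is \emph{not} cyclic, and derive a contradiction from Sela's description of imaginaries \cite{SelaIm} together with the standard algebraic features of torsion-free hyperbolic groups — commutative transitivity, the CSA property, malnormality of maximal cyclic subgroups, and the Tits alternative.

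\emph{Reductions.} Since $Th(G)$ is connected, $G$ has no proper finite-index definable subgroup, so $H$ has infinite index; replacing $H$ by its connected component (still definable, still of infinite index) we may assume $H$ is connected, as a finite-index subgroup of a non-cyclic torsion-free hyperbolic group is again non-cyclic (a torsion-free virtually cyclic group is infinite cyclic). Because abelian subgroups of $G$ are cyclic, $H$ is non-abelian, so $Z(H)=1$ (a nontrivial central element would put $H$ inside its own centralizer, which is cyclic) and, by the Tits alternative, $H$ contains a free subgroup of rank $2$. Finally, the chain conditions available in a stable group make the core $\bigcap_{g\in G}gHg^{-1}$ equal to a finite subintersection, hence a definable normal subgroup of $G$ contained in $H$; were it nontrivial and cyclic it would be central (conjugation maps $G$ onto a subgroup of $\Aut(\Z)$, with kernel of index at most $2$, hence all of $G$ by connectedness), contradicting $Z(G)=1$. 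So we may assume either this core is trivial, or it is itself a non-cyclic proper definable normal subgroup — and in the latter case work instead with that subgroup, equivalently with the infinite interpretable proper quotient group of $G$ that it defines.

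\emph{Using Sela's imaginaries.} The key move is to feed the infinite interpretable coset space $X=G/H$, together with the transitive action of $G$ on it, into Sela's classification of imaginaries of $Th(G)$. Used as a black box, that classification reduces a suitable nonempty definable piece $D\subseteq X$ to a definable subset of a product of the home sort with finitely many of the explicitly described geometric imaginary sorts, and from this one extracts a definable finite-to-one correspondence between $D$ and a definable real set $D'\subseteq G^{n}$. Choosing $x\in D$ generic over a model carrying all the parameters, $x$ becomes interalgebraic with a real tuple $\bar u\in D'$, so the point stabilizer $\Stab_G(x)$ — a generic conjugate of $H$ — is definable over $\mathrm{acl}^{\mathrm{eq}}(\bar u)$; equivalently, the conjugates of $H$ form a definable family of subgroups of $G$ parametrized, up to finite ambiguity, by a real set. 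The uniqueness of the generic type $p^{G}_{0}$ enters here to make ``generic conjugate of $H$'' precise and to phrase the incompatibility below in terms of forking independence.

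\emph{Conclusion and the main difficulty.} One then plays this essentially real-parametrized family of conjugates of $H$ against malnormality: by stability (Baldwin--Saxl) the family has a common finite subintersection, a definable subgroup lying inside $H$, while the CSA property and commutative transitivity tightly control how two distinct conjugates of $H$ can overlap; combining this control with the uniformity coming from Sela's classification should force the family to collapse, so that $H$ becomes commensurable with a cyclic subgroup and hence, being torsion-free and virtually cyclic, cyclic — the desired contradiction. I expect the main obstacle to be precisely this interface: converting ``$X$ is interpretable'' into genuinely uniform, real-parametrized information about the conjugates of $H$, and ruling out that any of Sela's geometric imaginary sorts can carry a transitive $G$-action whose point stabilizer is non-cyclic of infinite index. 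The group-theoretic ingredients (malnormality, commutative transitivity, the Tits alternative, connectedness) are routine; the weight of the proof sits in the analysis of $(Th(G))^{\mathrm{eq}}$, which is exactly what Sela's work supplies, and an alternative to going through $G/H$ would be to run the same strategy directly on the generic type of the connected group $H$ itself.
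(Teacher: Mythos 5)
Your opening move --- encode the coset space of the putative non-cyclic definable subgroup $A$ as an imaginary and feed it to Sela's classification --- is indeed how the paper starts, but your proposal has a genuine gap at exactly the point you flag as ``the main obstacle,'' and the step just before it is false as stated. Sela's theorem (Theorem \ref{Elim}) does \emph{not} let you ``extract a definable finite-to-one correspondence between $D$ and a definable real set $D'\subseteq G^{n}$'': it codes each class, up to finite ambiguity, by tuples drawn from the home sort \emph{together with the basic imaginary sorts} (conjugacy classes, left/right cosets of cyclic subgroups, double cosets), and these basic sorts cannot in general be pushed down to real tuples. Consequently the ``essentially real-parametrized family of conjugates of $H$'' on which your concluding paragraph rests does not exist, and the proposed endgame via Baldwin--Saxl, malnormality and CSA is left entirely unexecuted --- you yourself concede it is the crux.

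What actually closes the argument in the paper is a different, concrete mechanism. Work in $\F_3=\F_2*\langle e_3\rangle$ (resp.\ $EC(G)*\langle e\rangle$), where $e_3$ realizes the generic type over $\F_2$. Connectedness shows that the finite Sela-code $R_A(e_3,\F_3^{eq})$ of the coset $e_3A$ must contain a basic-sort element lying in $\F_3^{eq}\setminus\F_2^{eq}$: otherwise, by elementarity, $e_3$ would be $E_A$-equivalent to some $c\in\F_2$, making $A$ generic and contradicting connectedness. One then uses the automorphisms $f_a$ of $\F_3$ fixing $\F_2$ and sending $e_3\mapsto e_3a$ for $a\in A$: these fix the coset $e_3A$ and the ($\F_2$-definable) relation $R_A$, hence permute the finite code, yet a normal-form computation in the free product (Lemma \ref{MalOrb} and Corollary \ref{corMalOrb}) shows that for $a,c\in A$ with $[a,c]\neq 1$ the group generated by $f_a$ and $f_c$ gives every new basic-sort element an \emph{infinite} orbit. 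This is where non-abelianness of $A$ (which you correctly derived) is used, and it is precisely the combinatorial input your sketch lacks. Your preliminary reductions are mostly harmless but also mostly unnecessary; note in passing that in a merely stable (non-superstable) group the connected component of a definable subgroup is in general only type-definable, so ``replacing $H$ by its connected component (still definable)'' is itself unjustified.
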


In the same section we also prove:

\begin{thmIntro} \label{Orb3}
Let $\F$ be a free group of rank at least $3$. Then no orbit of a finite (nontrivial) tuple under $Aut(\F)$ is definable.
\end{thmIntro}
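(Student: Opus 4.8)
The plan is to convert the statement into a question about isolated types and then invoke Sela's description of imaginaries as the crucial input. First I would reduce to a single element: if $\psi(x_{1},\dots,x_{k})$ defined the $\Aut(\F)$-orbit of a tuple $\bar a=(a_{1},\dots,a_{k})$ with, say, $a_{1}\neq 1$, then $\exists x_{2}\cdots\exists x_{k}\,\psi$ would define the $\Aut(\F)$-orbit of $a_{1}$; so it suffices to show that no nontrivial $a\in\F$ has definable orbit. Suppose then that $\phi(x)$ defines $O:=\Aut(\F)\cdot a$ with $a\neq 1$, and set $p:=\tp(a/\emptyset)$. Since every automorphism of $\F$ is an elementary permutation, $O\subseteq p(\F)$; as $\phi\in p$ (because $a\in O$) and $Th(\F)$ is complete, this forces $Th(\F)\vdash\forall x\,(\phi(x)\to\theta(x))$ for every $\theta\in p$. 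Hence $\phi$ isolates $p$ over $\emptyset$ and $O=p(\F)$, so the theorem is equivalent to: no nontrivial element of a free group of rank $\geq 3$ realizes an isolated type over $\emptyset$.

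So assume for contradiction that $p=\tp(a)$ is isolated with $a\neq 1$. Using $\rk\F\geq 3$, write $\F=\F_{0}\ast\langle t\rangle$ with $\F_{0}$ a free factor of rank $\geq 2$; by Sela's results on the elementary theory of free groups $\F_{0}\preceq\F$, so the isolated type $p$ is realized inside $\F_{0}$, and — since $O=p(\F)$ — after composing with an automorphism of $\F$ we may assume $a\in\F_{0}$. At this point I would feed the isolating formula, the group structure, and the retraction $\F\to\F_{0}$ killing $t$ into Sela's classification of the imaginary sorts of $\F$ \cite{SelaIm}, used as a black box: each $b\in O$ carries $\emptyset$-definable invariants (its centralizer, the maximal cyclic subgroup it generates, and the finitely many ``geometric'' data recording how $b$ sits inside $\F$), and these assemble into $\emptyset$-definable maps from $O$ into the geometric sorts of $\F^{eq}$. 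Sela's description constrains the possible shape of such maps, and I would play this against the fact that in rank $\geq 3$ the $\Aut(\F)$-action on $O$ mixes these invariants across infinitely many $\F$-conjugacy classes — producing, in a suitable (possibly larger) model, two realizations of $p$ which $\phi$ cannot tell apart from the rest of $O$ yet which must lie in distinct orbits, contradicting $O=p(\F)$ and hence the isolation of $p$. Theorem~\ref{Mal} may enter here to dispose of degenerate configurations.

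The soft parts are the reduction steps: the projection and the passage ``definable orbit $\Rightarrow$ isolated type'' are routine. The main obstacle is the last step — extracting a genuine contradiction from Sela's imaginaries and pinning down exactly where $\rk\F\geq 3$ is essential (morally, one wants at least one free generator beyond a nonabelian free factor containing $a$, so that $\F_{0}\preceq\F$, the conjugation action, and the retraction $\F\to\F_{0}$ together leave enough room). As a by-product the argument would show that, for $\rk\F\geq 3$, the set of primitive elements of $\F$ is not definable.
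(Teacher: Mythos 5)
There is a genuine gap, and it sits in your central reduction. You claim that if $\phi(x)$ defines $O=\Aut(\F)\cdot a$ then $\phi$ isolates $\tp(a/\emptyset)$, so that the theorem reduces to ``no nontrivial element realizes an isolated type over $\emptyset$.'' This inference only works if $\phi$ is parameter-free: from $O\subseteq p(\F)$ you get $\F\models\forall x(\phi(x,\bar b)\to\theta(x))$ for the actual parameters $\bar b$, which does not make $p$ isolated over $\emptyset$ (compare: $x=e_1$ implies the generic type of $\F_2$, which is not isolated). The theorem is explicitly about definability \emph{with parameters}, and the rank-$2$ case shows your equivalence is false as stated: by Nielsen's result every orbit in $\F_2$ is definable (with parameters), yet by Theorem 4.8 of \cite{TowersLPS} no nontrivial type over $\emptyset$ is isolated. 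So the content of the theorem is precisely the passage from ``not $\emptyset$-definable'' (which is the known non-isolation result) to ``not definable with parameters,'' and your reduction collapses the former onto the latter.

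The missing idea is to apply Sela's imaginaries not to the elements of the orbit but to its \emph{canonical parameter}. Since $X=\Aut(\F)\cdot\bar v$ is $\Aut(\F)$-invariant, its canonical parameter $\mathfrak e\in\F^{eq}$ is fixed by every automorphism; weak elimination of imaginaries (Theorem \ref{0Elim}) codes $\mathfrak e$ by the finite set $R_E(\bar a,\F^{eq})$ of tuples from the basic sorts. If all these tuples are trivial, $X$ is $\emptyset$-definable, contradicting \cite{TowersLPS}; if some tuple is nontrivial, Corollary \ref{InfOrb} — and this is exactly where rank $\geq 3$ enters, since it requires writing $\F=G*\langle e\rangle$ with $G$ a nontrivial free product containing elements with distinct centralizers — shows that tuple has infinite orbit under $\Aut(\F)$, so some automorphism moves $\mathfrak e$ and hence moves $X$, a contradiction. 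Your final paragraph gestures at Sela's classification and at ``mixing invariants across conjugacy classes,'' but it never produces this (or any) concrete contradiction, and as written the argument would apply equally to $\F_2$, where the conclusion is false.
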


Let us stress that by \emph{definable} we mean definable possibly \emph{with parameters}.

The paper is organised as follows. In the following section we give some model theoretic background 
around torsion-free hyperbolic groups. We give special emphasis to the notion of imaginaries  
and we give a precise account of elimination of imaginaries in model theoretic terminology as this is crucial
for questions regarding interpretability. 

In Section \ref{NDR}, we give certain non-definability results. In particular we use Sela's elimination of imaginaries result 
to prove Theorem \ref{Mal} and Theorem \ref{Orb3}.

In Section \ref{AbIntGr} we prove a result that forbids abelian interpretable groups 
in non abelian free groups to ``gain'' an element in higher rank free groups. This result is an elaboration  on material in 
Chapter 8 of the third author's Ph.D. thesis \cite{ThesisSklinos}, but now we have to consider not only real tuples but imaginaries. 
We also prove an analogous result for torsion-free hyperbolic groups.

Finally in Section \ref{Generic}, we bring everything together to prove Theorem \ref{For}.

\section{Some model theory of torsion-free hyperbolic groups}\label{Ima}

We start our discussion with the free group case. Our notation is fairly standard. By $\F_{n}$ we denote the free group on $n$ generators 
and we usually denote a basis of $\F_n$ by $e_{1},..., e_{n}$. By $T_{fg}$ we denote the common theory of non abelian free groups. 
We also note that the natural embedding of $\F_{n}$ in $\F_{m}$ for $2\leq n < m$ is elementary (as proved by Sela and Kharlampovich-Myasnikov).

In \cite{PoizatGenericAndRegular} Poizat proved that $\F_{\omega}$ is connected (thus $T_{fg}$ is connected). 
Moreover the following theorem has been proved by the 
second named author in \cite{PillayForking}. 

\begin{theorem}
Let $\F_{\omega}:=\langle e_1,\ldots,e_n,\ldots\rangle$. Then $(e_i)_{i<\omega}$ is a Morley sequence in $p^{\F_n}_0$. 
In particular $tp(e_{n+1}/\F_n)$ is generic.
\end{theorem}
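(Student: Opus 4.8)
The plan is to combine three facts: that $\F_{\omega}$ has many automorphisms, that $T_{fg}$ is stable (Sela), and that $T_{fg}$ is connected (Poizat, recalled above). Work in a big saturated model $\mathfrak{C}\models T_{fg}$, write $G$ for the group, and fix $n\geq 2$ so that $\F_n=\langle e_1,\dots,e_n\rangle\preceq\F_{\omega}\preceq\mathfrak{C}$. The argument has three steps: (1) $(e_i)_{i<\omega}$ is a totally indiscernible sequence over $\emptyset$; (2) by stability it is then a Morley sequence over $\mathrm{acl}^{eq}(\emptyset)$ in a stationary type $q$, so the $e_i$ are forking-independent and $\tp(e_{n+1}/\F_n)$ is the nonforking extension of $q$ to $\F_n$; (3) $q$, equivalently $\tp(e_{n+1}/\F_n)$, is the generic type $p^G_0$, which is where connectedness enters.

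For (1): given tuples $(e_{i_1},\dots,e_{i_k})$ and $(e_{j_1},\dots,e_{j_k})$ of pairwise distinct basis elements, the partial map $e_{i_\ell}\mapsto e_{j_\ell}$ extends to a permutation of the basis of $\F_{\omega}$, hence to an automorphism of $\F_{\omega}$; since $\F_{\omega}\models T_{fg}$ and automorphisms preserve types over $\emptyset$, $(e_i)_{i<\omega}$ is indiscernible, in fact an indiscernible set. For (2): in a stable theory an infinite indiscernible set is a Morley sequence over $\mathrm{acl}^{eq}(\emptyset)$ in its (stationary) average type $q$; in particular the $e_i$ are mutually forking-independent over $\mathrm{acl}^{eq}(\emptyset)$, hence over $\emptyset$. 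Since $\F_n$ is a model we have $\mathrm{acl}^{eq}(\emptyset)\subseteq\mathrm{dcl}^{eq}(\F_n)$ and $\F_n=\mathrm{dcl}(e_1,\dots,e_n)$, so the Morley-sequence property forces $\tp(e_{n+1}/\F_n)$ not to fork over $\mathrm{acl}^{eq}(\emptyset)$; by stationarity of $q$, it is the unique nonforking extension of $q$ to $\F_n$.

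For (3): since $T_{fg}$ is connected it has a unique generic type, so it suffices to prove that $p:=\tp(e_{n+1}/\F_n)$ is generic, equivalently that $\Stab(\tilde p)=G$ where $\tilde p$ is the global nonforking extension of $p$. For each $g\in\F_n$, the homomorphism of $\F_{n+1}$ fixing $e_1,\dots,e_n$ and sending $e_{n+1}$ to $g e_{n+1}$ is an automorphism of $\F_{n+1}$ (an elementary Nielsen automorphism) fixing $\F_n$ pointwise; as $\F_{n+1}\preceq\mathfrak{C}$ this yields $\tp(g e_{n+1}/\F_n)=\tp(e_{n+1}/\F_n)$. Hence $p$ is invariant under left translation by every element of $\F_n$, so $\Stab(\tilde p)\supseteq\F_n$. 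Now $\Stab(\tilde p)$ is a type-definable subgroup of $G$ over parameters in $\mathrm{dcl}^{eq}(\F_n)$, say $\Stab(\tilde p)=\bigcap_i D_i$ with each $D_i$ definable over $\mathrm{dcl}^{eq}(\F_n)$; since $D_i\supseteq\F_n$, the sentence ``$G\subseteq D_i$'' holds in $\F_n^{eq}$ and hence, by $\F_n^{eq}\preceq\mathfrak{C}^{eq}$, in $\mathfrak{C}^{eq}$, so $D_i=G$ and $\Stab(\tilde p)=G$. Thus $\tilde p$ is the unique left-$G$-invariant global type on $G$, i.e. the generic type, so $p$ is the generic type over $\F_n$. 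Combined with (2), $(e_i)_{i<\omega}$ is a Morley sequence in $p^G_0$ and $\tp(e_{n+1}/\F_n)$ is generic.

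I expect step (3) to be the main obstacle. Because $T_{fg}$ is stable but not superstable, genericity is not visible through Lascar or Morley rank and must be treated via the stabilizer and translation-invariance characterizations together with connectedness; in addition one must keep the canonical base of $\tilde p$ — and so the defining parameters of $\Stab(\tilde p)$ — inside $\mathrm{dcl}^{eq}(\F_n)$ for the elementarity argument to run, and must reconcile the base $\mathrm{acl}^{eq}(\emptyset)$ produced by the indiscernible set with the model $\F_n$ over which the statement is phrased.
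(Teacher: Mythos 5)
The paper offers no proof of this statement---it is quoted directly from \cite{PillayForking}---so the only comparison available is with the known argument, and your step (3) is essentially that argument: translation-invariance of $\tp(e_{n+1}/\F_n)$ under $\F_n$ via the Nielsen automorphisms $e_{n+1}\mapsto ge_{n+1}$, hence $\Stab(\tilde p)\supseteq\F_n$; each definable superset of $\Stab(\tilde p)$ coming from a $\phi$-stabilizer is defined over $\F_n^{eq}$ (automatic, since $\tilde p$ is the nonforking extension of a type over the model $\F_n$ and is therefore definable over $\F_n^{eq}$---your closing worry about canonical bases is not an issue) and contains the elementary submodel $\F_n$, hence equals $G$; so $\tilde p$ is left-invariant, hence generic, and connectedness identifies it with the unique generic type. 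This part is correct and is the real content of the theorem.

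The genuine flaw is in step (2). It is false that in a stable theory an infinite indiscernible set is a Morley sequence over $\mathrm{acl}^{eq}(\emptyset)$ in its average type: an indiscernible set is a Morley sequence over the \emph{canonical base} of its average type, which need not lie in $\mathrm{acl}^{eq}(\emptyset)$. For instance, in $\mathrm{ACF}_0$ an infinite set of independent generic points of a curve defined over a transcendental parameter $t$ is totally indiscernible over $\emptyset$, yet any two of its members fork over $\emptyset$, since each of them algebraizes $t$. Since your final sentence invokes (2) to obtain the Morley-sequence conclusion, the proof as written has a hole. It is repairable from your own step (3): once $\tp(e_{m+1}/\F_m)=p^{\F_n}_0|\F_m$ for every $m\geq 2$, connectedness makes $p^{\F_n}_0$ an $\emptyset$-definable, hence stationary, type over $\emptyset$, so $e_{m+1}$ is independent from $\F_m\supseteq\{e_1,\ldots,e_m\}$ over $\emptyset$ and $\tp(e_{m+1}/e_1,\ldots,e_m)=p^{\F_n}_0|(e_1,\ldots,e_m)$ by stationarity; the cases $m\leq 1$ then follow from this together with the total indiscernibility established in step (1). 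So the argument should be reorganized as (1), then (3), then the Morley-sequence claim, with the false general principle removed.
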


Now, let $G$ be a torsion-free hyperbolic group. Sela assigns to such a group its elementary core $EC(G)$, which is
an  elementary subgroup of $G$ provided $G$ is not elementarily equivalent to a
free group (for a definition and further properties see
\cite{Sel7}). Note that if $G$ is elementarily equivalent  to a free group,
$EC(G)$ is the trivial group. The elementary core of $EC(G)*\mathbb{Z}$ is again $EC(G)$, so the latter is an elementary subgroup of the former. 
This observation led Ould Houcine to the following
result \cite{OuldHoucineHomogeneity}

\begin{theorem} \label{HypGen}
Let $G$ be a torsion-free hyperbolic group not elementarily equivalent to a free group. Then $G$ is connected. 
Moreover if $H:=EC(G)*\langle e\rangle$, then $tp^{H}(e/EC(G))$ is generic. 
\end{theorem}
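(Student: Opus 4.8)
The plan is to follow the free group template: Poizat's connectedness result and the Morley-sequence theorem above, transported to $G$ via the one new ingredient, namely that $EC(G)$ embeds elementarily not only in $G$ but in $EC(G)*\Z$, and hence---iterating, the elementary core being unchanged by free product with $\Z$---in $EC(G)*\F_n$ for every $n$. Let $H_\omega:=EC(G)*\F_\omega$ be the union of the elementary chain $EC(G)\preceq EC(G)*\F_1\preceq EC(G)*\F_2\preceq\cdots$; then $H_\omega\succeq EC(G)$, so $H_\omega\equiv G$, and $H=EC(G)*\langle e\rangle$ is its first step, so $\tp^H(e/EC(G))=\tp^{H_\omega}(e_1/EC(G))$. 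It thus suffices to show that $H_\omega$ (equivalently $G$) is connected and that $\tp^{H_\omega}(e_1/EC(G))$ is the generic type.

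Connectedness is in fact immediate from Theorem \ref{Mal}: a definable proper subgroup of the non-cyclic torsion-free hyperbolic group $G$ is cyclic, hence of infinite index, so $G$ has no proper definable subgroup of finite index. I would also record a direct argument, since it previews the next step: ``no proper definable finite-index subgroup'' is a scheme of first-order sentences, hence preserved under $\equiv$, so it is enough to treat $EC(G)$; if $EC(G)$ had a proper normal subgroup $N$ of finite index $m\ge2$ defined by $\phi(x,\bar a)$ with $\bar a$ from $EC(G)$, then since $EC(G)\preceq EC(G)*\langle e\rangle$ the same formula defines a normal subgroup $\widehat N$ of index $m$ in $EC(G)*\langle e\rangle$ with $\widehat N\cap EC(G)=N$, so $EC(G)$ maps onto $(EC(G)*\langle e\rangle)/\widehat N$; but the transvection $e\mapsto ew$ (for $w\in EC(G)$), an automorphism fixing $EC(G)$ pointwise and hence $\bar a$, preserves $\widehat N$ and so induces the identity on the quotient, forcing $ew\equiv e\pmod{\widehat N}$, i.e. $w\in\widehat N$ for all $w\in EC(G)$, whence $N=EC(G)$, a contradiction.

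For genericity, the same transvection trick applied to the automorphism $e\mapsto ge$ of $H$ (for $g\in EC(G)$) fixing $EC(G)$ pointwise shows that $q:=\tp^H(e/EC(G))$ is invariant under left translation by $EC(G)$. I would then invoke the standard fact that, in a connected stable group, a complete type over an $\aleph_0$-saturated model which is invariant under left translation by the group's points of that model is the generic type: its global nonforking extension $\widetilde q$ has a type-definable-over-$M$ stabilizer containing $G(M)$, which by $\aleph_0$-saturation of $M$ and connectedness must be all of $G$, so $\Stab(\widetilde q)=G=G^0$ and $\widetilde q$, hence $q$, is generic. To make this bite over the given (non-saturated) $EC(G)$, pass to an $\aleph_0$-saturated $M\succeq EC(G)$: using that free product with $\Z$ preserves elementary embeddings one gets $H\preceq M*\langle e\rangle$, so $q$ is the restriction to $EC(G)$ of $\tp^{M*\langle e\rangle}(e/M)$, and the transvection argument inside $M*\langle e\rangle$ makes the latter left-$M$-translation-invariant, hence generic over $M$; restricting a generic type gives a generic type. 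Moreover, since every permutation of the free basis of $\F_\omega$ extends to an automorphism of $H_\omega$ fixing $EC(G)$, the sequence $(e_i)_{i\ge1}$ is set-indiscernible over $EC(G)$, hence a Morley sequence over $EC(G)$ in this generic type---the exact analogue of the free group statement above.

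I expect the main obstacle to be that last point: the implication ``left-translation-invariant $\Rightarrow$ generic'' genuinely needs a saturated base, so one cannot argue over $EC(G)$ directly, and the reduction to $M*\langle e\rangle$ rests on the non-trivial (but, in this setting, available) preservation of elementary embeddings and of elementary substructures under free product with $\Z$---precisely the homogeneity-flavoured input around which Ould Houcine's proof is organised. The connectedness half, by contrast, is essentially free.
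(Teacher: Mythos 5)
The paper gives no proof of this theorem at all --- it is quoted from Ould Houcine's paper \cite{OuldHoucineHomogeneity} --- so there is nothing internal to compare against. Your reconstruction is essentially correct and is the expected one: both halves are Poizat's arguments for the free group transported to $EC(G)$ through the single deep input $EC(G)\preceq EC(G)*\langle e\rangle$ (Proposition \ref{HypElem}). Two remarks on the details.

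First, you cannot get connectedness from Theorem \ref{Mal} in the context of this paper: the proof given here of Theorem \ref{Mal} for $G$ not elementarily equivalent to a free group explicitly invokes Theorem \ref{HypGen}, so that route is circular. Fortunately your direct transvection argument is complete and self-contained (reduce to a normal $N=\phi(EC(G),\bar a)$ of finite index $m$, transfer to $\widehat N=\phi(H,\bar a)$ of the same index, observe every $\widehat N$-coset meets $EC(G)$, and conclude $w\in\widehat N\cap EC(G)=N$ for all $w\in EC(G)$), and the reduction of connectedness of $G$ to connectedness of $EC(G)$ via a scheme of sentences is fine.

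Second, your final worry is misplaced: the implication ``left-translation-invariant $\Rightarrow$ generic'' does \emph{not} need a saturated base, only a base which is a model, and $EC(G)$ is a model of $Th(G)$ (it is an elementary subgroup of $G$). Concretely, the global nonforking extension $\tilde q$ of $q=\tp^H(e/EC(G))$ is definable over $EC(G)$; each local stabilizer $\Stab_\phi(\tilde q)$ is then an $EC(G)$-definable set containing $\Stab(\tilde q)\supseteq EC(G)$, hence equals the whole group by elementarity (no saturation needed); so $\Stab(\tilde q)$ has bounded index and $\tilde q$ is generic. This lets you delete the detour through $M*\langle e\rangle$, which is just as well, since the fact it relies on (that $EC(G)\preceq M$ implies $EC(G)*\Z\preceq M*\Z$ for an arbitrary elementary extension $M$) is not among the results quoted in this paper, even though it is available in the literature. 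The closing Morley-sequence observation is extra to what the theorem asserts but harmless.
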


In \cite{AmpleOK} the following useful result was proved.
 
\begin{proposition}\label{HypElem}\label{HypChain}
Let $G$ be a torsion-free hyperbolic group not elementarily equivalent to a free group. Suppose $K$ is a free factor 
of a free group $\F$. Then $EC(G)*K$ is an elementary subgroup of $G*\F$. In particular since $EC(EC(G))=EC(G)$ 
we have an elementary chain $$EC(G)\prec EC(G)*\mathbb Z\prec\ldots \prec EC(G)*\F_n\prec\ldots.$$
\end{proposition}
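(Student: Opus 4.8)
The plan is to reduce the displayed chain to the first assertion, and to prove the first assertion by exhibiting $G*\F$ as a hyperbolic tower over $EC(G)*K$. For the reduction: $EC(G)$ is again a torsion-free hyperbolic group with $EC(G)\equiv G$, hence not elementarily equivalent to a free group, and it satisfies $EC(EC(G))=EC(G)$; so applying the first assertion with $EC(G)$ in place of $G$, with $K=\F_n$ and $\F=\F_{n+1}$, yields $EC(G)*\F_n\prec EC(G)*\F_{n+1}$ for every $n\ge 0$ --- the link $EC(G)\prec EC(G)*\Z$ being the case $n=0$, which is in any case the elementary-core observation recalled just before the statement. Thus it suffices to prove that $EC(G)*K\prec G*\F$.

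Here I would fix a free splitting $\F=K*L$, so that $G*\F=(G*K)*L$, and invoke the first author's characterisation of elementary subgroups of torsion-free hyperbolic groups: a non-cyclic subgroup $H$ of a torsion-free hyperbolic group $\Gamma$ is elementary in $\Gamma$ whenever $\Gamma$ is a hyperbolic tower over $H$. First note that all the base groups occurring below are non-cyclic: $EC(G)$ is nontrivial (since $G$ is not elementarily free) and torsion-free, and $EC(G)\not\equiv\Z$ (again because $EC(G)\equiv G$ is not elementarily free), so $EC(G)$ is not cyclic, and a fortiori neither are $EC(G)*K$, $G*K$ and the intermediate groups below. Next, since $EC(G)\prec G$ by the defining property of the elementary core, the characterisation theorem --- or directly Sela's construction of $EC(G)$ in \cite{Sel7} --- provides a hyperbolic tower structure of $G$ over $EC(G)$, say $G=\Gamma_m>\Gamma_{m-1}>\dots>\Gamma_0=EC(G)$ with each $\Gamma_{i+1}$ a floor over $\Gamma_i$.

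I would then assemble the tower $G*\F$ over $EC(G)*K$ in two blocks. The upper block, $G*\F=(G*K)*L$ over $G*K$, is a tower of free-product floors, one for each free generator of $L$ (free products with free groups being admissible floors). The lower block, $G*K$ over $EC(G)*K$, would be produced from the tower $\Gamma_m>\dots>\Gamma_0$ by performing the same floor moves with $\Gamma_i$ replaced throughout by $\Gamma_i*K$: one keeps the graph-of-groups decomposition realising each floor, with its quadratically-hanging and abelian vertex groups and its edge groups unchanged, only the ground part $\Gamma_i$ being enlarged to $\Gamma_i*K$, and one extends the floor retraction $\Gamma_{i+1}\to\Gamma_i$ by the identity on $K$ to a retraction $\Gamma_{i+1}*K\to\Gamma_i*K$; this presents $\Gamma_{i+1}*K$ as a floor of the same type over $\Gamma_i*K$. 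Concatenating the two blocks (a tower over a tower is a tower) exhibits $G*\F$ as a hyperbolic tower over the non-cyclic group $EC(G)*K$, whence $EC(G)*K\prec G*\F$ by the characterisation theorem.

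The step I expect to be the main obstacle is precisely this ``free product rides along a tower'' verification: checking that each admissible floor type, together with all the data witnessing it (JSJ-type splitting, quadratically-hanging and abelian vertex groups, floor retraction) and the accompanying non-degeneracy conditions, genuinely survives the operation $\Gamma_i\mapsto\Gamma_i*K$, and that the non-cyclicity needed to apply the characterisation persists at each level. What is really being proved in the lower block is the base-change statement: if $A\prec B$ are torsion-free hyperbolic groups and $F$ is free, then $A*F\prec B*F$. An alternative to the tower argument would be to deduce this base-change statement from Sela's and Jaligot--Sela's analysis of Makanin--Razborov diagrams over free products, which yields the stability of the relevant $\forall\exists$-theories, and hence of types, under free products. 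Modulo this point the argument is formal, taking the elementary core \cite{Sel7} and the tower characterisation of elementary embeddings as black boxes.
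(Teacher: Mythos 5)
The paper offers no proof of this proposition: it is quoted from \cite{AmpleOK}, so there is no in-house argument to measure yours against. Your reconstruction is sound and is, as far as I can tell, essentially the argument of that reference: reduce the chain to the first assertion via $EC(EC(G))=EC(G)$ and the fact that $EC(G)$ is itself a torsion-free hyperbolic group elementarily equivalent to $G$; obtain a hyperbolic tower structure of $G$ over the non-cyclic group $EC(G)$ (this comes directly out of Sela's construction of the core in \cite{Sel7} --- note that in the characterisation of elementary subgroups the direction you invoke at the end, ``tower over a non-cyclic subgroup implies elementary'', is Sela's, while Perin proved the converse, so your attribution is slightly off); perform the base change $\Gamma_i\mapsto\Gamma_i*K$ floor by floor; stack free floors for $L$ on top; and conclude. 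The base-change step you single out is indeed the only place where real work is required, and it does go through: attaching $K$ to one of the non-surface vertex groups of each floor decomposition (or as a new vertex over a trivial edge group) leaves the surface-type vertices, the edge groups and the images of the surface groups under the extended retraction $r*\mathrm{id}_K$ unchanged, so the non-degeneracy conditions persist, and one only has to check that $\Gamma_i*K$ is still generated by the non-surface vertex groups; non-cyclicity of every base follows from that of $EC(G)$, which you verify correctly. Modulo writing that verification out against the precise definition of a hyperbolic floor, the proof is complete.
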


\subsection{Imaginaries in torsion-free hyperbolic groups}

We first give a quick overview of the model theoretic notion of imaginaries as well as various notions of elimination of imaginaries. 
We then specialize to torsion-free hyperbolic groups and give Sela's result.

Recall that $\mathcal{M}^{eq}$ is constructed from $\mathcal{M}$ 
by adding a new sort for each $\emptyset$-definable equivalence relation, $E(\bar{x},\bar{y})$, together 
with a class function $f_E:M^n\rightarrow M_E$, where $M_E$ (the domain of the new sort corresponding to $E$) 
is the set of all $E$-equivalence classes. The elements in these new sorts are called {\em imaginaries}. 
Note that any automorphism of $\cal M$ has a canonical extension to ${\cal M}^{eq}$.

We say that $\mathcal{M}$ eliminates imaginaries if it has a saturated elementary extension $\mathbb{M}$ 
in which for any element $\mathfrak{e}$ of $\mathbb{M}^{eq}$, 
there is a finite tuple $\bar{b}\in\mathbb{M}$ such that $\mathfrak{e}\in dcl^{eq}(\bar{b})$ and $\bar{b}\in dcl^{eq}(\mathfrak{e})$. 

We say that $\mathcal{M}$ weakly eliminates imaginaries if it has a saturated elementary extension $\mathbb{M}$ 
in which for any element $\mathfrak{e}$ of $\mathbb{M}^{eq}$, 
there is a finite tuple $\bar{b}\in\mathbb{M}$ such that $\mathfrak{e}\in dcl^{eq}(\bar{b})$ 
and $\bar{b}\in acl^{eq}(\mathfrak{e})$.

We now specialize to torsion-free hyperbolic groups.

\begin{definition} \label{BasicSorts}
Let $G$ be a torsion-free hyperbolic group. The following equivalence relations in $G$ are called basic.
\begin{itemize}
 \item[(i)] $E_1(a,b)$ if and only if there is $g\in G$ such that $a^g=b$. (conjugation)
  \item[$(ii)_m$] $E_{2_m}((a_1,b_1),(a_2,b_2))$ if and only if either $b_1=b_2=1$ or $b_1\neq 1$ and $C_{G}(b_1)=C_{G}(b_2)=\langle b \rangle$ and
 $a_1^{-1}a_2\in\langle b^m \rangle$. ($m$-left-coset)
  \item[$(iii)_m$] $E_{3_m}((a_1,b_1),(a_2,b_2))$ if and only if either $b_1=b_2=1$ or $b_1\neq 1$ and $C_{G}(b_1)=C_{G}(b_2)=\langle b \rangle$ and
 $a_1a_2^{-1}\in\langle b^m \rangle$. ($m$-right-coset)
  \item[$(iv)_{m,n}$] $E_{4_{m,n}}((a_1,b_1,c_1),(a_2,b_2,c_2))$ if and only if either $a_1=a_2=1$ or $c_1=c_2=1$ or 
 $a_1,c_1\neq 1$ and $C_{G}(a_1)=C_{G}(a_2)=\langle a \rangle$ and $C_{G}(c_1)=C_{G}(c_2)=\langle c \rangle$
   and there is $\gamma\in \langle a^m \rangle$ and $\epsilon\in \langle c^n \rangle$ such that $\gamma b_1 \epsilon=b_2$. ($m,n$-double-coset)
 \end{itemize}
 
 \end{definition}
 
 The following is Theorem 4.4 in \cite{SelaIm}:
 
 \begin{theorem}\label{Elim}
 Let $G$ be a torsion-free hyperbolic group. Let $E(\bar{x},\bar{y})$ be a definable equivalence relation in $G$, with $\abs{\bar{x}}=m$.
 Then there exist $k,l<\omega$ and a definable relation
 $$R_E \subseteq G^m \times G^k \times S_1(G) \times \ldots \times S_l(G)$$
 such that:
 \begin{itemize}
  \item[(i)] each $S_i(G)$ is one of the basic sorts;
  \item[(ii)] there is some $s\in\N$ such that for each $\bar{a}\in G^m$, $\abs{R_E(\bar{a},G^{eq})}\leq s$;
  \item[(iii)] $\forall\bar{z}(R_E(\bar{a},\bar{z})\leftrightarrow R_E(\bar{b},\bar{z}))$ if and only if $E(\bar{a},\bar{b})$.
 \end{itemize}
 \end{theorem}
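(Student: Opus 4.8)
We do not give a full proof of Theorem \ref{Elim}: it is one of the main results of \cite{SelaIm} and we use it throughout as a black box. Nonetheless, let us indicate the shape of the argument one would carry out, following Sela. The plan is to analyse the definable equivalence relation $E(\bar x,\bar y)$ through its family of classes $\{E(\bar a,G):\bar a\in G^m\}$, regarded as a definable family of definable subsets of $G^m$, and to attach to each class a canonical finite packet of data living in the basic sorts. The entire analysis takes place inside the machinery developed for Tarski's problem: limit groups, Makanin--Razborov diagrams, the cyclic (abelian) JSJ decomposition, and the Rips--Sela shortening argument.

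First I would fix a class $C=E(\bar a,G)$ and run the standard procedure that encodes a definable set by finitely many limit quotients of a free group: the data witnessing membership in $C$, together with the data witnessing non-membership for points outside $C$, organises into finitely many maximal limit groups, each equipped with its cyclic JSJ decomposition, whose edge groups are maximal cyclic subgroups. The key structural input is that the automorphisms of such a configuration preserving the definable set $C$ are, up to finite index, generated by Dehn twists along these cyclic edge groups, i.e.\ they lie in the associated modular group. From a decomposition one then reads off a finite tuple of invariants which is well defined only up to the corresponding coset data: conjugacy classes of the relevant vertex images account for the sort $E_1$, and, for each Dehn twist, the coset of a suitable power of the twisting element accounts for the sorts $E_{2_m}$, $E_{3_m}$, $E_{4_{m,n}}$ of Definition \ref{BasicSorts}.

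Two things must then be checked. First, that this packet depends only on $C$ and not on the chosen representative $\bar a$; this yields property (iii), namely that $R_E(\bar a,\cdot)$ and $R_E(\bar b,\cdot)$ coincide exactly when $E(\bar a,\bar b)$ holds. Second, that the packet pins $C$ down to only finitely many possibilities, uniformly in $\bar a$; this yields the bound $s$ in (ii), and it comes from the descending chain condition on limit quotients in a Makanin--Razborov diagram together with the shortening argument, which bounds the number of competing configurations. Assembling these invariants into a single definable relation $R_E\subseteq G^m\times G^k\times S_1(G)\times\cdots\times S_l(G)$ with the $S_i(G)$ among the four basic sorts is then essentially bookkeeping, once one knows that each invariant is uniformly definable over the parameters of $E$.

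The main obstacle --- and the technical core of \cite{SelaIm} --- is the structural input above: that the modular group generated by Dehn twists along the cyclic edge groups of the JSJ accounts for \emph{all} the ambiguity in representing an $E$-class, and that this ambiguity is captured faithfully by coset data of powers of elements rather than by anything wilder. Establishing this requires the fine theory of definable sets over free and torsion-free hyperbolic groups --- formal solutions, test sequences, and the classification of families of definable sets --- which is precisely why we prefer to quote the result.
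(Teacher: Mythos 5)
Your proposal matches the paper exactly in substance: Theorem \ref{Elim} is quoted there verbatim as Theorem 4.4 of \cite{SelaIm} with no proof given, so deferring to Sela as a black box is precisely what the paper does, and your accompanying sketch of the Makanin--Razborov/JSJ/modular-group mechanism is a fair description of where the basic sorts come from. No gap to report, since neither you nor the paper is obliged to reprove Sela's result here.
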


In the case of $\emptyset$-definable equivalence relations a slight variation of the above theorem is true 
(see Theorem 4.4 and Proposition 4.5 in \cite{SelaIm}):

 \begin{theorem}\label{0Elim}
 Let $G$ be a torsion-free hyperbolic group. Let $E(\bar{x},\bar{y})$ be a $\emptyset$-definable equivalence relation in $G$, with $\abs{\bar{x}}=m$.
 Then there exist $k,l<\omega$, finitely many ``exceptional'' $\emptyset$-definable equivalence classes, defined by $\phi_i(\bar{x})$, $i\leq n$, 
 and a $\emptyset$-definable relation
 $$R_E \subseteq G^m \times G^k \times S_1(G) \times \ldots \times S_l(G)$$
 such that:
 \begin{itemize}
  \item[(i)] each $S_i(G)$ is one of the basic sorts;
  \item[(ii)] there is some $s\in\N$ such that for each $\bar{a}\in G^m$, $\abs{R_E(\bar{a},G^{eq})}\leq s$;
  \item[(iii)] $G^{eq}\models\forall\bar{x}(\phi_1(\bar{x})\lor\phi_2(\bar{x})\lor\ldots\lor\phi_n(\bar{x})
  \rightarrow R_E(\bar{x},1,1,\ldots,(1,1,1))\land \abs{R_E(\bar{x},\bar{z})}=1)$, 
  i.e. $R_E$ assigns to each tuple in any of the ``exceptional'' classes a single tuple consisting of identity elements in the corresponding sorts; 
  \item[(iv)] for any two tuples $\bar{a},\bar{b}$ not in the union of the ``exceptional'' classes we have   
  $G^{eq}\models\forall\bar{z}(R_E(\bar{a},\bar{z})\leftrightarrow R_E(\bar{b},\bar{z}))$ if and only if $E(\bar{a},\bar{b})$.
 \end{itemize}
 \end{theorem}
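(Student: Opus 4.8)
The plan is to deduce Theorem~\ref{0Elim} from Theorem~\ref{Elim} by eliminating the parameters in the coding relation, the new ingredient being that $E$ is now $\emptyset$-definable. First I would apply Theorem~\ref{Elim} to $E$, getting $k,l<\omega$, basic sorts $S_1(G),\dots,S_l(G)$, a bound $s\in\N$, and a formula $\rho(\bar x,\bar z;\bar w)$ such that, for some finite tuple $\bar c$, the relation $R'_E:=\rho(G^m,\cdot\,;\bar c)$ satisfies (i)--(iii) of Theorem~\ref{Elim}. The only defect of $R'_E$ is that it is definable over $\bar c$ rather than over $\emptyset$; repairing this is where the finitely many exceptional classes will be forced upon us.

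Next, parameter removal. Let $\Theta(\bar w)$ be the formula saying ``$\rho(G^m,\cdot\,;\bar w)$ satisfies clauses (ii) and (iii) of Theorem~\ref{Elim} with the bound $s$''. Because $E$ is $\emptyset$-definable, clause (iii) --- the only one mentioning $E$ --- is expressible over $\emptyset$, so $\Theta$ is $\emptyset$-definable, and it is nonempty since $\bar c\in\Theta$. For each $\bar w\in\Theta$ we get an injective map $\iota_{\bar w}$ from $G^m/E$ into the set of subsets of size at most $s$ of $G^k\times S_1(G)\times\dots\times S_l(G)$, namely $[\bar a]_E\mapsto\rho(\bar a,\cdot\,;\bar w)$. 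The naive $\emptyset$-definable candidate is then $R_E(\bar x,\bar z):\equiv\forall\bar w\,(\Theta(\bar w)\to\rho(\bar x,\bar z;\bar w))$, i.e.\ $R_E$ assigns to each class $C$ the intersection $\bigcap_{\bar w\in\Theta}\iota_{\bar w}(C)$. This is $\emptyset$-definable, constant on $E$-classes, and has fibres of size $\le s$, so (i) and (ii) of Theorem~\ref{0Elim} hold; what remains is (iv), the injectivity on $E$-classes of $\bar a\mapsto R_E(\bar a,\cdot)$. Injectivity can fail only for classes $C$ at which this intersection degenerates --- becomes empty, or too small to separate $C$ from another class. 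The crux is to show that the collection of such bad classes is a finite union of $\emptyset$-definable classes: these will be the exceptional classes $\phi_1,\dots,\phi_n$, on which one redefines $R_E$ to send every tuple to $(1,\dots,1)$ with a single-element fibre, yielding clause (iii); off them, the construction already gives (i), (ii) and (iv).

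Identifying the bad classes is the heart of the matter. I would argue that a class $C$ is bad precisely when it admits no bounded nontrivial $\emptyset$-definable code, and that this forces $C$ to be $\emptyset$-definable, i.e.\ $f_E(\bar a)\in dcl^{eq}(\emptyset)$ for $\bar a\in C$. Distinct $\emptyset$-definable classes yield distinct elements of $dcl^{eq}(\emptyset)$, and applying Theorem~\ref{Elim} to equality on the relevant product sort shows each such element is interdefinable with a bounded tuple from the home sort and the basic sorts lying in $acl^{eq}(\emptyset)$; since $acl^{eq}(\emptyset)$ is finite on each of those sorts (it is essentially trivial when $G$ is a free group), there are only finitely many $\emptyset$-definable $E$-classes, which gives the bound $n$.

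The step I expect to be the real obstacle is making the parameter elimination work \emph{uniformly}: the plain intersection over all of $\Theta$ may collapse on a priori more than just the $\emptyset$-definable classes, so one may instead need to keep those $\bar z$ lying in $\iota_{\bar w}(C)$ for ``generically many'' $\bar w\in\Theta$, or to intersect over a carefully chosen $\emptyset$-definable subfamily of $\Theta$, and then verify that off the exceptional classes the family $\{\iota_{\bar w}(C)\}_{\bar w\in\Theta}$ still determines $C$. This is exactly what Proposition~4.5 of \cite{SelaIm} supplies, so in practice Theorem~\ref{0Elim} can be quoted from there; the sketch above is the shape a self-contained argument would take.
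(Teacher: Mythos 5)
The paper gives no proof of Theorem~\ref{0Elim} at all: it is stated as a direct quotation of Theorem~4.4 and Proposition~4.5 of \cite{SelaIm}, and your proposal correctly identifies Proposition~4.5 there as the source and ultimately defers to it, so your treatment matches the paper's. Your preliminary sketch of a self-contained derivation from Theorem~\ref{Elim} is a reasonable outline, but the two points you yourself flag --- making the parameter elimination uniform so that the intersection over $\Theta$ degenerates only on $\emptyset$-definable classes, and proving there are only finitely many such classes --- are exactly the content of Sela's Proposition~4.5 and are not supplied here or in the paper.
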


Let $G$ be a group elementarily equivalent to a torsion-free hyperbolic group, 
by $G^{we}$ we denote the expansion of $G$ by the above basic sorts, i.e. 
$G^{we}=(G,S_1(G),\{S_{2_m}(G)\}_{m<\omega},$ $\{S_{3_m}(G)\}_{m<\omega},\{S_{4_{m,n}}(G)\}_{m,n<\omega})$. 
The above theorem easily implies that $G^{we}$ weakly eliminates imaginaries.

\begin{corollary}
Let $G$ be a torsion-free hyperbolic group. Then $G^{we}$ weakly eliminates imaginaries.
\end{corollary}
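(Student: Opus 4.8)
The plan is to deduce weak elimination of imaginaries for $G^{we}$ directly from Theorem~\ref{Elim}. Work in a saturated elementary extension $\mathbb{M}$ of $G^{we}$, and let $\mathfrak{e}$ be an arbitrary imaginary element of $\mathbb{M}^{eq}$. By construction of $\mathbb{M}^{eq}$, there is an $\emptyset$-definable equivalence relation $E(\bar x,\bar y)$ with $|\bar x|=m$ and a tuple $\bar a\in\mathbb{M}^m$ such that $\mathfrak{e}=f_E(\bar a)$, the $E$-class of $\bar a$. First I would apply Theorem~\ref{Elim} to $E$ (note: to a $\emptyset$-definable $E$ one could instead invoke Theorem~\ref{0Elim}, but since we only need a finite tuple from the \emph{basic sorts}, not from $G$ alone, the parameter version suffices after absorbing the parameters; alternatively just use Theorem~\ref{0Elim} and observe the exceptional classes cause no trouble since they are $\emptyset$-definable and so are fixed setwise by all automorphisms). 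This yields a definable relation $R_E\subseteq G^m\times G^k\times S_1\times\cdots\times S_l$, with the $S_i$ basic sorts, bound $s$ on fibre sizes $|R_E(\bar a,G^{eq})|$, and the key property $(iii)$: $E(\bar a,\bar b)$ holds if and only if $\bar a$ and $\bar b$ define the same fibre of $R_E$.

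Next I would extract from $\mathfrak{e}$ a finite tuple $\bar b$ in $G^{we}$. Since $|R_E(\bar a,\mathbb{M}^{eq})|\le s$, the fibre $R_E(\bar a,\mathbb{M}^{eq})$ is a finite set of tuples, each tuple lying in $G^k\times S_1\times\cdots\times S_l$; because each $S_i$ is a basic sort, such a tuple is (coded by) a finite tuple of elements of $G^{we}$. Enumerate the finitely many elements of the fibre; this gives a single finite tuple $\bar b$ of elements of $G^{we}$ (concatenate, in a canonically ordered way — e.g. ordered by a fixed definable ordering of the relevant product sort in $\mathbb{M}^{eq}$, or simply pass to the finite \emph{set}, which is itself coded by a finite tuple since finite sets of imaginaries are coded). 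Property $(iii)$ says precisely that this fibre depends only on the $E$-class of $\bar a$, i.e. only on $\mathfrak{e}$; hence $\bar b\in dcl^{eq}(\mathfrak{e})$. Conversely, from $\bar b$ one recovers the fibre $R_E(\bar a,\cdot)$, and by $(iii)$ again the $E$-class $\mathfrak{e}$ is the unique class with that fibre, so $\mathfrak{e}\in dcl^{eq}(\bar b)\subseteq dcl^{eq}$ of the finite $G^{we}$-tuple $\bar b$. Thus $\mathfrak{e}$ and $\bar b$ are interdefinable, which is even stronger than what weak elimination requires ($\bar b\in acl^{eq}(\mathfrak{e})$).

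Finally I would note two bookkeeping points. First, the definition of weak elimination in the excerpt is stated for $\mathcal{M}$ (here $G$), asking for a saturated elementary extension in which every imaginary is $dcl^{eq}$-above and $acl^{eq}$-below a finite real tuple; since $G^{we}$ is an expansion of $G$ by the basic sorts, ``real'' tuples for $G^{we}$ means tuples from $G$ together with the basic sorts, which is exactly what $\bar b$ is, so the statement is about $(G^{we})^{eq}$ and the argument above applies verbatim. Second, one should check that the finite set $R_E(\bar a,\cdot)$ is genuinely coded by a finite tuple of $G^{we}$-elements and not merely by an imaginary of $(G^{we})^{eq}$: this is automatic because a finite subset of a product of basic sorts is coded by the (unordered) tuple listing its elements, and each basic-sort element is a single element of a basic sort of $G^{we}$; no further coding is needed. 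The main (minor) obstacle is purely presentational — making precise the passage from ``the fibre of $R_E$ over $\bar a$'' to ``a finite tuple in $G^{we}$'' in a way that is manifestly automorphism-invariant — and this is handled by working with the finite set itself rather than an arbitrary enumeration. No deep input beyond Theorem~\ref{Elim} is required; the corollary is a direct unwinding of its statement.
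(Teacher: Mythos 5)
Your overall strategy is the same as the paper's: apply Sela's coding theorem to the equivalence relation defining $\mathfrak{e}$, and use the finite fibre $R_E(\bar a,\cdot)$ as the real tuple witnessing weak elimination. However, two specific steps in your write-up would fail as stated.

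First, you claim that the fibre can be enumerated ``in a canonically ordered way'' (via ``a fixed definable ordering of the relevant product sort''), or that the finite \emph{set} is ``coded by a finite tuple'' of basic-sort elements, so that $\bar b\in dcl^{eq}(\mathfrak{e})$ and $\mathfrak{e}$ and $\bar b$ are interdefinable. There is no definable linear ordering on a group or on its basic sorts (any nontrivial automorphism moving elements of the fibre destroys it), and a finite set of tuples is in general coded only by an \emph{imaginary}, not by a real tuple in a way definable from the set. What is true, and all that weak elimination requires, is that any concatenation $\bar b$ of the elements of the fibre lies in $acl^{eq}(\mathfrak{e})$: the formula asserting that $\bar x_1,\ldots,\bar x_k$ enumerate the fibre of some $\bar y$ with $f_E(\bar y)=\mathfrak{e}$ has only the finitely many permuted enumerations as solutions. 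This is exactly how the paper argues; your stronger interdefinability claim should be dropped.

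Second, your primary route through Theorem~\ref{Elim} ``after absorbing the parameters'' does not work. If $R_E$ is definable only over some parameter tuple $\bar d$, then the fibre is definable from $\mathfrak{e}$ \emph{together with} $\bar d$, so you only get $\bar b\in acl^{eq}(\mathfrak{e},\bar d)$; since $\bar d$ need not be algebraic over $\mathfrak{e}$, the required containment $\bar b\in acl^{eq}(\mathfrak{e})$ is lost. One must use the $\emptyset$-definable version, Theorem~\ref{0Elim}, which you do mention as an alternative; there the exceptional classes are handled exactly as you suggest (each is $\emptyset$-definable, so an imaginary in an exceptional class lies in $dcl^{eq}(1)$). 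With these two corrections your argument coincides with the paper's proof.
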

\begin{proof}
We work in a saturated extension $\mathbb{G}$ of $G$. Let $\mathfrak{e}$ be an element of $\mathbb{G}^{eq}$. 
Then there is a $\emptyset$-definable equivalence relation $E(\bar{x},\bar{y})$ in $G$ such that 
$\mathfrak{e}=[\bar{a}]_E$ for some element $\bar{a}\in\mathbb{G}$. By Theorem \ref{0Elim}, $E$ is assigned a $\emptyset$-definable 
relation $R_E$ such that $R_E(\bar{a},\bar{z})$ has finitely many solutions in $\mathbb{G}^{we}$. 

First assume that 
$\bar{a}$ belongs to an ``exceptional'' class, then we claim that $\mathfrak{e}\in dcl^{eq}(1)$, 
indeed since each exceptional class is $\emptyset$-definable any automorphism will fix $\mathfrak{e}$. 
Now suppose that $\bar{a}$ does not belong to an exceptional class and $(\bar{a}_1\bar{a}_2\ldots\bar{a}_k)$ is the concatenation of 
the solutions of $R_E(\bar{a},\bar{z})$ in $G^{eq}$. We claim that 
$\mathfrak{e}\in dcl^{eq}(\bar{a}_1\bar{a}_2\ldots\bar{a}_k)$ and $\bar{a}_1\bar{a}_2\ldots\bar{a}_k\in acl^{eq}(\mathfrak{e})$. 
The formula $\exists\bar{y}(f_E(\bar{y})=x\land R_E(\bar{y},\bar{a}_1)\land\ldots\land R_E(\bar{y},\bar{a}_k) \land 
\forall \bar{w} ( R_E(\bar{y}, \bar{w}) \to \bigvee^{k}_{i=1} \bar{w} = \bar{a}_i))$ 
defines $\mathfrak{e}$ in $\mathbb{G}^{eq}$. Now consider the formula $\exists \bar{y}(R_E(\bar{y},\bar{x}_1)\land\ldots\land R_E(\bar{y},\bar{x}_k)\land 
f_E(\bar{y})=\mathfrak{e})$, the solution set of this formula is $\{ \bar{a}_1, \bar{a}_2, \ldots, \bar{a}_k \}^k$, which is finite.
\end{proof}
 
\section{Some (non) definability results}\label{NDR}
We first prove that the only definable proper subgroups of a torsion-free hyperbolic group 
are cyclic. This implies, in the case where $G$ is non-cyclic, that $Th(G)$ is definably simple. 

 
Let $G$ be a non-cyclic torsion-free hyperbolic group and $H:=G*\langle e\rangle$. 
If $a$ is an element in $G$, we denote by $f_a$ the automorphism of $H$ which is the identity on $G$ and satisfies $f_a(e)=ea$.

\begin{lemma}\label{MalOrb}
Let  $\beta \in H^{eq}\setminus G^{eq}$ be an element from a basic
sort. Suppose $\beta$ is not contained in the following list:

\begin{enumerate}
\item $\beta=[b]_{E_1}$, where $b=b_1e^{i_1}b_2\ldots b_m e^{i_m}$ with 
$m>1$, $b_i\in G\setminus\{1\}$ for all $i\leq m$ and $b_i\in C(a)$ for some $i\leq m$; or

\item $\beta=[(c,b)]_E$ for $E=E_{2_p}$ or $E=E_{3_p}$, or $\beta=[(b,c,d)]_{E_{4_{p,q}}}$, 
where $b=b_1e^{i_1}b_2e^{i_2}\ldots b_m$ $e^{i_m}b_{m+1}$ with $m>1$, $b_i\in G$ (if $1<i<m+1$, then $b_i$ is non trivial) and $b_2\in C(a)$; or

\item $\beta=[(c,b)]_E$ for $E=E_{2_p}$ or $E=E_{3_p}$, or $\beta=[(b,c,d)]_{E_{4_{p,q}}}$, where $b,d\in G$ and 
$c=c_1e^{i_1}c_2\ldots c_m e^{i_m}c_{m+1}$ with $c_i\in G$ (if $1<i<m+1$, then $c_i$ is non trivial). 
Moreover, either some $c_i$, for $1<i<m+1$, is in $C(a)$ or $b$ and/or $d$ belong to the centralizer of some conjugate of $a$ 
(by an element of $G$).
 
\end{enumerate}
Then $\beta$ has infinite orbit under $\langle f_a^i : i\in\Z\rangle$. 
\end{lemma}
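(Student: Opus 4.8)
The plan is to analyze how the automorphisms $f_a^i$ act on a representative of $\beta$ living in one of the basic sorts, and to show that unless $\beta$ falls into the listed exceptional cases, the images $f_a^i(\beta)$ are pairwise distinct. The key structural input is the normal form for elements of $H = G * \langle e \rangle$: every element is uniquely written as an alternating product $g_0 e^{j_1} g_1 \cdots e^{j_k} g_k$ with $g_i \in G$ and the $g_i$ nontrivial for $0 < i < k$. Since $f_a$ fixes $G$ pointwise and sends $e \mapsto ea$, applying $f_a^i$ replaces each occurrence $e^{j_\ell}$ by $(ea^i)^{j_\ell}$; after re-normalizing, the syllables $g_\ell$ strictly between two $e$-powers get conjugated/translated by powers of $a^i$, and this is the mechanism that, generically, forces infinitely many distinct values.

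First I would treat the conjugacy sort $E_1$. Here $\beta = [b]_{E_1}$ with $b$ not conjugate (in $H$) into $G$, so in cyclically reduced form $b = b_1 e^{i_1} \cdots b_m e^{i_m}$ with $m \geq 1$ and all $b_i$ nontrivial. Using the standard solution to the conjugacy problem in a free product (two cyclically reduced elements of $e$-length $\geq 1$ are conjugate iff they are cyclic permutations of each other), I would compute that $f_a^i(b)$ is conjugate to $b$ for only finitely many $i$ unless every $b_\ell$ appearing between consecutive $e$-syllables is centralized by $a$ — precisely the excluded case (1) (the case $m=1$ being absorbed, since then there is no "internal" syllable and $\beta \in H^{eq}\setminus G^{eq}$ already forces, via the finite-length bound in Sela's description, that... actually one checks $m>1$ is the only way to land in $H^{eq}\setminus G^{eq}$ non-trivially here, matching the hypothesis). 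The combinatorial heart is: conjugating a cyclically reduced word by $a^i$ and asking when the result is again a cyclic permutation of the original; this is a finiteness statement about solutions of equations $a^{i} b_\ell a^{-i} = b_{\ell'}$ in $G$, which fails for infinitely many $i$ precisely when $b_\ell \in C(a)$.

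Next, the coset sorts $E_{2_p}, E_{3_p}$ and the double-coset sort $E_{4_{p,q}}$: an element of these sorts is (essentially) a tuple $(c,b)$ or $(b,c,d)$ where the "direction" element(s) $b$ (resp. $a',c'$ for the double coset) generate centralizers, and one further element lies in a coset of a power of that direction. The analysis splits according to whether $f_a$ moves the direction element(s) or the coset representative. If the direction element is genuinely in $H \setminus G$ (normal form with $m > 1$), then by the $E_1$-analysis its $\langle f_a^i\rangle$-orbit is already infinite unless it meets case (3); if the direction element is in $G$, then $f_a$ fixes it, the relevant centralizer is in $G$, and one reduces to tracking the coset representative $b$ (or $c$), whose normal form again gets its internal syllables shuffled by $a^i$ — giving infinitely many cosets unless those syllables are centralized by $a$, i.e. case (2), or the boundary data $b,d$ interact with a conjugate of $C(a)$, i.e. the second clause of case (3). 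I would organize this as a short case table indexed by (which sort) $\times$ (direction in $G$ or not), each entry either citing the $E_1$ computation or reducing to the same one-variable finiteness lemma $|\{i : a^i g a^{-i} = g' \text{ or } a^i \in \text{coset}\}| < \infty$ unless $g \in C(a)$.

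The main obstacle I anticipate is bookkeeping rather than conceptual: correctly re-normalizing $f_a^i(b)$ when the word $b$ begins or ends with an $e$-power (so that the substituted $a^i$'s at the two ends can cancel or merge with the boundary syllables $b_0, b_{m}$ or with the coset data $b,d$), and making sure the resulting finite-exceptional-set description matches exactly the three listed cases — in particular distinguishing "some internal syllable is in $C(a)$" from "a boundary syllable lies in the centralizer of a $G$-conjugate of $a$", which is why case (3) has the extra "moreover" clause. I would isolate the one-variable statement — for $g, g' \in G$, the set $\{i \in \Z : a^i g a^{-i} = g'\}$ is finite unless $g \in C(a)$ (and then it is all of $\Z$, with $g'=g$), and similarly cosets $\langle g^p\rangle a^i$ coincide for infinitely many $i$ only if $a \in \langle g^p\rangle$-ish — as a preliminary claim using the fact that centralizers in a torsion-free hyperbolic group are cyclic, and then feed it mechanically through each normal-form case.
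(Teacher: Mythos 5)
Your plan follows essentially the same route as the paper's proof: analyse the $G*\langle e\rangle$ normal form of a representative, observe that $f_a^i$ inserts powers of $a^i$ around the $e$-syllables, and reduce finiteness of the orbit to the one-variable statement that $a^i g a^{-i}$ (resp.\ $a^i g$, $g a^{-i}$) takes infinitely many values unless $g\in C(a)$, with the same case split over the basic sorts and over whether the ``direction'' element of a coset sort lies in $G$. One caveat: your parenthetical claim that $m>1$ is the only way for a conjugacy class to land in $H^{eq}\setminus G^{eq}$ is false (already $[b_1e]_{E_1}$ is such a class); the correct reason $m=1$ does not appear in the exceptional list is that the elements $f_a^i(b_1e^{\pm 1})$ are pairwise non-conjugate, so the orbit is automatically infinite in that case.
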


\begin{proof}
Assume that $\beta$ has finite orbit. We take cases according to the list of basic sorts.
\begin{itemize}
 \item Let $\beta=[b]_{E_1}$. Let $b_1e^{i_1}b_2\ldots b_m e^{i_m}$ be the normal form of $b$ 
with respect to the decomposition $G*\langle e\rangle$. Suppose for some $j\leq m$ we have that $\abs{i_j}>1$. Then 
$ea^ie$ or $e^{-1}a^{-i}e^{-1}$ will appear in the normal form of $f^i(b)$ and the result follows easily. Suppose, for some $j$ we 
have that $i_{j (mod \ m)}=i_{j+1 (mod \ m)}$. Then $ea^ib_{j+1 (mod \ m)} e$ or 
$e^{-1}b_{j+1 (mod \ m)} a^{-i}e^{-1}$ will appear in the normal form of $f^i(b)$ so there must be $k$ so that $b_k = a^ib_{j+1 (mod \ m)}$ or $b_k = b_{j+1 (mod \ m)} a^{-i}$ for infinitely many values of $i$, which is impossible. If $m=1$, then is not hard to see that $[b_1e^{\pm 1}]_{E_1}$ 
has infinite orbit under $\langle f^i | i<\omega\rangle$. Lastly, suppose for some $j$,  $i_{j(mod \ m)}=1$ and $i_{j+1 (mod \ m)}=-1$ 
but $b_{j+1 (mod \ m)}\not\in C(a)$. Then $e a^ib_{j+1 (mod \ m)} a^{-i} e^{-1}$ will appear in the normal form of 
$f^i(b)$ and since $[a,b_{j+1 (mod \ m)}]\neq 1$ the result follows.
 \item Let $\beta=[(b,c,d)]_{E_{4_{p,q}}}$. We take two subcases: 
   \begin{itemize}
    \item[(i)] Suppose that $b\in H\setminus G$. 
    Since $H$ is a torsion-free hyperbolic group we have that there is a $\gamma\in H\setminus G$ such that  
    $f^i(b)=\gamma$ for infinitely many $i$'s. Now, let $b_1e^{i_1}b_2\ldots b_m e^{i_m}b_{m+1}$ be the normal form of 
    $b$ with respect to the decomposition $G*\langle e\rangle$. It follows as in the case above that $\abs{i_j}=1$ for all $j\leq m$, 
    and $i_j\neq i_{j+1}$ for all $j<m$. Moreover, $m>1$ and if for some $j<m$, $i_j=1$ and $i_{j+1}=-1$, 
    then $b_{j+1}\in C(a)$. What remains to be shown is that $i_1\neq -1$, which is trivial to check.
    \item[(ii)] Suppose $b,d\in G$ and let $c_1e^{i_1}c_2\ldots c_m$ $ e^{i_m}c_{m+1}$ 
    be the normal form of $c$ with respect to the decomposition $G*\langle e\rangle$. Suppose that $\abs{i_j}>1$ for some $j\leq m$, 
    then $f^i(c)$ will have a normal form containing $ea^ie$ or $e^{-1}a^{-i}e^{-1}$, thus 
    $\epsilon f^i(c)\delta\neq f^j(c)$ for any $\epsilon,\delta\in G$ and the result follows. 
    Similarly one can prove that $i_j\neq i_{j+1}$, for all $j<m$ and if for some $j<m$, $i_j=1$ and $i_{j+1}=-1$ then $c_{j+1}\in C(a)$.  
    We now take cases that depend on the value of $m$. If $m>2$, it is clear that either $c_2$ or $c_3$ will be in $C(a)$. 
    Suppose $m=1$ and $c=c_1ec_2$ (or $c=c_1e^{-1}c_2$). Then 
    $f^i(c)=c_1ea^ic_2$ (or $f^i(c)=c_1a^{-i}e^{-1}c_2$) and $\epsilon f^i(c)\delta=f^j(c)$ only if 
    $\delta\in C(c_2^{-1}ac_2)$ (only if $\epsilon \in C(c_1ac_1^{-1}$)). Lastly suppose 
    $m=2$, in this case we can only have $i_1=-1$ and $i_2=1$ (otherwise $c_2\in C(a)$), 
    thus we have $c=c_1e^{-1}c_2ec_3$ and 
    $\epsilon f^i(c)\delta=f^j(c)$ only if $\epsilon\in C(c_1ac_1^{-1})$ and $\delta\in C(c_3^{-1}ac_3)$.
   \end{itemize}
\item Let $\beta=[(c,b)]_E$ for $E=E_{2_p}$ or $E=E_{3_p}$. The proof is similar to the above and is left to the reader.  
\end{itemize}

\end{proof}

This immediately yields the following:

\begin{corollary}\label{corMalOrb}\label{InfOrb} 
Let $a,c\in G$ with $C(a) \neq C(c)$. 
Then for any element $\beta\in H^{we}\setminus G^{we}$, $\beta$ has infinite orbit 
under $\langle f^i_a, f^i_c | i<\omega\rangle$. 
In particular, if $G$ is a non-trivial free product and $\beta\in H^{we}$, 
then $Aut(H).\beta$ is infinite.
\end{corollary}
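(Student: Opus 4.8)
The plan is to run the trichotomy of Lemma~\ref{MalOrb} twice --- once with deformation element $a$, once with $c$ --- and then play the two conclusions against each other. The one algebraic fact I need is standard: in a torsion-free hyperbolic group the centralizer $C(x)$ of any $x\neq 1$ is the \emph{unique} maximal cyclic subgroup containing $x$, so that two distinct such centralizers meet trivially and, conjugating, $C(\gamma a\gamma^{-1})\cap C(\gamma c\gamma^{-1})=\gamma\bigl(C(a)\cap C(c)\bigr)\gamma^{-1}$ is trivial as soon as $C(a)\neq C(c)$.

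So suppose for contradiction that some $\beta\in H^{we}\setminus G^{we}$ has finite orbit under $\langle f^i_a,f^i_c\mid i<\omega\rangle$; then it has finite orbit under $\langle f^i_a\mid i\in\Z\rangle$ and under $\langle f^i_c\mid i\in\Z\rangle$ separately. If $\beta$ lies in one of the basic sorts, the contrapositive of Lemma~\ref{MalOrb} places $\beta$ in the exceptional list for $a$ \emph{and} for $c$ at once. Revisiting the proof of Lemma~\ref{MalOrb}, one sees that in each case finiteness of the $f_a$-orbit forces the exponents of the (cyclically reduced) normal form of $\beta$ to be $\pm1$ and alternating, and then forces a specific piece of normal-form data --- an interior $G$-syllable, or an outer coset representative --- to lie in $C(a)$, or in $C(\gamma a\gamma^{-1})$ for some syllable $\gamma$ of that same normal form. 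The key point is that \emph{which} datum this is, and \emph{which} syllable $\gamma$ is the conjugator, depends only on $\beta$ and not on the choice of $a$ versus $c$: the exponent sequence is intrinsic and the relevant data sit at the ``$+1$-then-$-1$'' positions or at the ends. Hence running the argument with $c$ in place of $a$ constrains \emph{the very same} datum by $C(c)$, respectively $C(\gamma c\gamma^{-1})$. In cases~(1) and~(2) this forces a nontrivial interior syllable (in case~(2), $b_2$, which is nontrivial since $m>1$) into $C(a)\cap C(c)=\{1\}$ --- absurd. In case~(3) one gets either a nontrivial interior syllable of the middle coordinate in $C(a)\cap C(c)$, or, in the short subcases $m\leq 2$, an equality of the form $C(b)=C(\gamma a\gamma^{-1})$ (and/or the analogue for $d$), whose $c$-version yields $C(\gamma a\gamma^{-1})=C(\gamma c\gamma^{-1})$, i.e.\ $C(a)=C(c)$, against the hypothesis; here $b\neq 1$ and $d\neq 1$ because $\beta\notin G^{we}$ excludes the degenerate $E_{2_p},E_{3_p},E_{4_{p,q}}$-classes with a trivial outer coordinate, and the $E_{2_p},E_{3_p}$ cases go exactly as $E_{4_{p,q}}$ (as already in Lemma~\ref{MalOrb}). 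If instead $\beta\in H\setminus G$: when $\beta$ is not $H$-conjugate into $G$, then $[\beta]_{E_1}\in H^{we}\setminus G^{we}$, and a finite $\beta$-orbit would give a finite orbit of the definable imaginary $[\beta]_{E_1}$, contradicting case~(1); when $\beta=w^{-1}gw$ with $1\neq g\in G$, the identity $f^i_a(\beta)=f^i_a(w)^{-1}\,g\,f^i_a(w)$ together with the same bookkeeping applied to $w$ makes the orbit infinite unless $g$ --- or a syllable-conjugate of it --- lies in $C(a)\cap C(c)=\{1\}$, i.e.\ unless $\beta\in G$. This establishes the first assertion.

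For the ``in particular'', write $G=A*B$ with $A,B\neq 1$ and choose $a\in A\setminus\{1\}$, $c\in B\setminus\{1\}$. In a free product $C_G(a)\leq A$ and $C_G(c)\leq B$, while $a\in C_G(a)$ and $a\notin B\supseteq C_G(c)$; hence $C(a)\neq C(c)$. By the first part, every $\beta\in H^{we}\setminus G^{we}$ already has infinite orbit under $\langle f^i_a,f^i_c\mid i<\omega\rangle\leq\Aut(H)$, and therefore under $\Aut(H)$.

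The conceptual content --- that distinct maximal cyclic subgroups meet trivially --- is immediate, so the real work, and the only genuine obstacle, is the bookkeeping: for each (sub)case in the proof of Lemma~\ref{MalOrb} one has to check that the datum forced by finiteness of the $f_a$-orbit into an $a$-centralizer is (i) genuinely nontrivial and (ii) pinned down, together with its conjugator $\gamma$, by $\beta$ alone --- so that finiteness of the $f_c$-orbit then bears on \emph{that same} datum via a $c$-centralizer and the triviality of $C(a)\cap C(c)$ can be invoked. The fiddliest points are the short subcases $m\leq 2$ of item~(3), where the constraint lands on a coset representative rather than on an interior syllable (so one must use $\beta\notin G^{we}$ to discard the degenerate classes), and the treatment of the case $\beta\in H\setminus G$.
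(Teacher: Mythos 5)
Your argument is correct and is precisely the derivation the paper intends: the corollary is stated with no proof as an immediate consequence of Lemma \ref{MalOrb}, and you supply the intended reasoning, correctly isolating the one point that actually needs checking, namely that the data forced into $C(a)$ (or a conjugate thereof) by finiteness of the $f_a$-orbit is the \emph{same} nontrivial syllable, or the same conjugator, as the one forced into $C(c)$ by the $f_c$-run, so that triviality of $C(a)\cap C(c)$ (equivalently, distinctness of the two centralizers) yields the contradiction; your separate treatment of real elements $\beta\in H\setminus G$, which Lemma \ref{MalOrb} does not cover, is also appropriate. One caveat: in the ``in particular'' clause you only establish the conclusion for $\beta\in H^{we}\setminus G^{we}$, whereas the clause as written quantifies over all of $H^{we}$; the restricted reading is the only one that follows from the first assertion (and is consistent with the paper's subsequent remark about $\F_2$), but be aware that handling $\beta\in G^{we}$ would require an additional argument, e.g.\ iterating over a further free decomposition of $G$.
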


We note that the particular case of Corollary \ref{InfOrb} fails in $\F_2$: the conjugacy class of $[e_1,e_2]$ 
has exactly two images under $Aut(\F_2)$ (see \cite[Proposition 5.1,p.44]{LyndonSchupp}).

\begin{theorem}
Let $G$ be a torsion-free hyperbolic group. Then any definable proper subgroup of $G$ is cyclic. 
\end{theorem}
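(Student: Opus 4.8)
The plan is to reduce to a single crucial situation and then play the automorphisms $f_g$ of $H:=G*\langle e\rangle$ against Sela's elimination of imaginaries, using the coset of a hypothetical bad subgroup as the imaginary to be eliminated. First the easy reductions: if $G$ is cyclic it is abelian and there is nothing to prove, so I assume $G$ is non-cyclic and suppose, towards a contradiction, that $D\leq G$ is a proper definable subgroup that is not cyclic. In a torsion-free hyperbolic group the centralizer of any nontrivial element is infinite cyclic, so every abelian subgroup of $G$ sits inside such a centralizer and is itself cyclic; hence $D$ must be non-abelian, and I fix $a,c\in D$ with $[a,c]\neq 1$. The same fact forces $C(a)\neq C(c)$, since otherwise $a$ and $c$ would both belong to the abelian group $C(a)$.

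Now I pass to $H:=G*\langle e\rangle$, again a torsion-free hyperbolic group; here I use the standard fact that the inclusion $G\hookrightarrow H$ is elementary. Fixing a formula $\phi(x,\bar p)$ with $\bar p\in G$ defining $D$, the set $D(H):=\phi(H,\bar p)$ is a subgroup of $H$ with $D(H)\cap G=D$, so $D(H)$ is a proper subgroup of $H$ containing $a$ and $c$. For $g\in G$ write $f_g$ for the automorphism of $H$ equal to the identity on $G$ with $f_g(e)=eg$; since $f_g\circ f_h=f_{gh}$, the group $\langle f_a,f_c\rangle$ contains every $f_a^i$ and $f_c^i$. Consider the imaginary $\beta:=e\,D(H)\in H^{eq}$, the class of $e$ under the $\bar p$-definable equivalence relation $x\sim y\iff x^{-1}y\in D(H)$. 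Each $f_g$ fixes $\bar p$ and therefore preserves $D(H)$ setwise, so $f_g(\beta)=eg\,D(H)$, which equals $\beta$ exactly when $g\in D(H)\cap G=D$. In particular $f_a(\beta)=f_c(\beta)=\beta$ because $a,c\in D$, whereas $f_{g_0}(\beta)\neq\beta$ for any $g_0\in G\setminus D$, and such $g_0$ exists because $D$ is proper.

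To finish I invoke the fact, established above, that $H^{we}$ weakly eliminates imaginaries: coding $\beta$ together with $\bar p$ by a $\emptyset$-definable imaginary and running that proof with $\bar p$ carried along as a parameter (via Theorems~\ref{Elim} and~\ref{0Elim}), I obtain a tuple $\bar\gamma$ from $H^{we}$ with $\beta\in dcl^{eq}(\bar\gamma,\bar p)$ and $\bar\gamma\in acl^{eq}(\beta,\bar p)$; the point is that $\bar\gamma$ lies in a finite set definable from $(\beta,\bar p)$. Since $f_a$ and $f_c$ fix both $\beta$ and $\bar p$, they preserve that finite set, so $\bar\gamma$ — hence each of its coordinates — has finite orbit under $\langle f_a,f_c\rangle$, in particular under $\langle f_a^i,f_c^i\mid i<\omega\rangle$. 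By Corollary~\ref{InfOrb}, which applies because $C(a)\neq C(c)$, any element of $H^{we}\setminus G^{we}$ has infinite orbit under that group; so every coordinate of $\bar\gamma$ lies in $G^{we}$. But then every $f_g$ with $g\in G$, being the identity on $G$ and hence on $G^{we}$, fixes $\bar\gamma$ pointwise and fixes $\bar p$, so it fixes $\beta\in dcl^{eq}(\bar\gamma,\bar p)$ — contradicting $f_{g_0}(\beta)\neq\beta$. This contradiction shows $D$ is cyclic.

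The step I expect to be the main obstacle is pinning down the right imaginary and seeing that the coset $\beta=e\,D(H)$ is caught between two forces: it is manifestly fixed by $f_a$ and $f_c$, which via weak elimination of imaginaries and Corollary~\ref{InfOrb} forces it to be coded over $G^{we}$ and thus fixed by \emph{every} $f_g$, yet as a coset modulo a proper subgroup it is manifestly moved by $f_{g_0}$ for $g_0\notin D$. The remaining ingredients are routine: that abelian subgroups of torsion-free hyperbolic groups are cyclic, and the elementarity of $G\hookrightarrow G*\langle e\rangle$, which is exactly what lets us regard $D$ as an honest subgroup $D(H)$ of $H$.
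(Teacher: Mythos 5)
Your proof is correct in its overall architecture and is close in spirit to the paper's: both arguments code the left-coset equivalence relation of the hypothetical subgroup by a finite set of basic-sort imaginaries via Sela's Theorem~\ref{Elim}, and both derive the contradiction from Corollary~\ref{InfOrb} applied to the automorphisms $f_a,f_c$ for non-commuting $a,c$ in the subgroup. There are two genuine differences. First, your endgame is a real (and arguably cleaner) variation: where the paper argues that some code $\bar b_i$ must lie outside $\F_2^{eq}$ by a genericity/connectedness argument (if all codes were in $\F_2^{eq}$, elementarity would produce $c\in\F_2$ equivalent to $e_3$, making $A$ generic), you instead observe that if all codes lie in $G^{we}$ then \emph{every} $f_g$ fixes $\beta=eD(H)$, forcing $G\subseteq D$ and contradicting properness directly. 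This avoids connectedness of the theory altogether. Second, and this is the one step you should not wave through as ``standard'': you work with the embedding $G\hookrightarrow G*\langle e\rangle$ and assert it is elementary for an arbitrary non-cyclic torsion-free hyperbolic $G$. That statement is true, but it rests on the hyperbolic-tower machinery (Sela, Perin) and is precisely what the paper's setup is designed to avoid: the paper only uses $\F_2\prec\F_3$ and $EC(G)\prec EC(G)*\Z$ (Theorem~\ref{HypGen}, Proposition~\ref{HypChain}), and accordingly first transfers the hypothetical counterexample down to $\F_2$ (when $G\equiv\F_2$) or to $EC(G)$ (otherwise), using that ``there is a proper definable non-abelian subgroup'' is a parameter-free first-order statement. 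If you either justify $G\prec G*\Z$ explicitly or perform that same reduction before running your argument, your proof is complete; as written, the appeal to elementarity of $G\hookrightarrow H$ is the only unsubstantiated step. (Your remaining reductions --- abelian subgroups of torsion-free hyperbolic groups are cyclic, $[a,c]\neq 1$ implies $C(a)\neq C(c)$, and the bookkeeping ensuring $R_E$ is defined over parameters fixed by $f_a,f_c$ --- are all fine and match the paper's usage.)
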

\begin{proof}
We may assume that $G$ is not cyclic. Suppose the result is not true. We first consider the case where $G$ is elementarily equivalent to 
a free group. Then there is a definable (over $\F_2$) non abelian subgroup of $\F_2$, which we denote by $A$. Let $E_A$ 
be the (definable over $\F_2$) equivalence relation defined by $E_A(a,b)$ if and only if $a\cdot A = b\cdot A$. Let $R_A$ 
be the relation given by Theorem \ref{Elim}. Let $R_A(e_3,\F_3^{eq})=\{\bar{b}_1,\ldots,\bar{b}_k\}$. 
We claim that there is $i\leq k$ such that $\bar{b}_i$ is in $\F_3^{eq}\setminus \F_2^{eq}$. Otherwise, since $\F_2$ is an elementary substructure, 
we find some $c\in \F_2$ with $R_A(c,\F_3^{eq})=\{\bar{b}_1,\ldots,\bar{b}_k\}$, so $c$ and $e_3$ are equivalent. 
But  then $e_3\cdot c^{-1}$ is in $A$. Since $A$ is definable over $\F_2$ we have that $A$ is generic, contradicting the 
connectedness of $T_{fg}$. So, without loss of generality, $\bar{b}_1\in \F_3^{eq}\setminus\F_2^{eq}$.
Now consider the automorphisms that fix $\F_2$ and send $e_3$ to $e_3\cdot a$ for some $a\in A$: 
every such automorphism fixes $R_A$, and  clearly $E_A(e_3,e_3\cdot a)$. Since $A$ is not abelian however, 
we can find $a, c$ such that $[a,c]\neq 1$: by Corollary \ref{corMalOrb}, $\bar{b}_1$ has 
infinitely many images under the iterates of $f_a$ and $f_c$, a contradiction.

Finally, suppose $G$ is not elementarily equivalent to a free group. Is not hard to see that the above argument is still valid 
if we replace $\F_2$ with the elementary core of $G$ and use Theorem \ref{HypGen}.

\end{proof}

The following corollary follows immediately from the fact that the normalizer of a cyclic 
subgroup in a torsion-free hyperbolic group coincides with its (cyclic) centralizer:

\begin{corollary}\label{DS}
Let $G$ be a non-cyclic torsion-free hyperbolic group. Then $Th(G)$ is definably simple.
\end{corollary}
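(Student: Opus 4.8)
The plan is to derive Corollary~\ref{DS} directly from Theorem~\ref{Mal}, together with the standard description of centralizers in torsion-free hyperbolic groups that is quoted in the remark just above.

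First I would reduce the question to $G$ itself. For each formula $\phi(\bar x,\bar y)$, the assertion ``for some $\bar b$, $\phi(\mathcal M,\bar b)$ is a proper nontrivial normal subgroup of $\mathcal M$'' is a single first-order sentence in the language of groups; since every model of $Th(G)$ is elementarily equivalent to $G$, if some model of $Th(G)$ possessed a definable proper nontrivial normal subgroup, then so would $G$. Hence it is enough to show that $G$ has no definable normal subgroup $N$ with $1\neq N\neq G$.

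Now suppose $N$ is a definable normal subgroup of $G$ with $N\neq 1$, and aim to conclude $N=G$. If $N$ were proper, then by Theorem~\ref{Mal} it would be cyclic, say $N=\langle n\rangle$ with $n\neq 1$ (and $\langle n\rangle\cong\Z$, $G$ being torsion-free). Normality of $N$ means that $G$ normalizes $\langle n\rangle$, i.e. $G=N_G(\langle n\rangle)$. But in a torsion-free hyperbolic group the normalizer of the cyclic subgroup generated by a nontrivial element $n$ coincides with the centralizer $C_G(n)$, and this centralizer is infinite cyclic; thus $G=C_G(n)$ would be cyclic, contradicting the hypothesis that $G$ is non-cyclic. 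Therefore $N=G$, which is exactly definable simplicity of $G$, and by the first step of $Th(G)$.

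I expect no genuine obstacle here: all the mathematical content sits in Theorem~\ref{Mal} (definable proper subgroups are cyclic) and in the black-box fact that normalizers of nontrivial cyclic subgroups in torsion-free hyperbolic groups are cyclic centralizers. The only subtlety worth an explicit sentence is the passage from ``$G$ is definably simple'' to ``every model of $Th(G)$ is definably simple'', which the elementary-equivalence argument in the first step settles.
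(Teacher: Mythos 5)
Your proof is correct and follows essentially the same route as the paper, which derives the corollary immediately from Theorem~\ref{Mal} together with the fact that the normalizer of a nontrivial cyclic subgroup of a torsion-free hyperbolic group is its (cyclic) centralizer. Your explicit transfer step from $G$ to arbitrary models of $Th(G)$ via a first-order sentence is a reasonable piece of bookkeeping that the paper leaves implicit.
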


We give another application of the weak elimination of imaginaries extending a result of 
Kharlampovich-Myasnikov (see \cite[Corollary 2]{MalcevKM}) that proved that the set of primitive elements of $\F_n$ is not definable if $n>2$.

In \cite[Theorem 4.8]{TowersLPS} it was proved that no (non-trivial) type in $S(T_{fg})$ is isolated, equivalently since free groups are homogeneous structures 
no finite tuple has $\emptyset$-definable orbit under $Aut(\F_n)$. An easy application of the weak elimination of imaginaries 
allows us, in free groups of rank at least $3$, to strengthen this 
result by showing that no orbit is definable even with parameters.
Intuitively the reason is that
every such orbit is $Aut(\F_n)$-invariant but every non trivial canonical parameter can be 
``moved'' by an automorphism of $\F_n$, thus the orbit can only be definable over $\emptyset$, contradicting 
the above mentioned theorem. Note that it is well known that the orbit of any tuple in $\F_2$ (under $Aut(\F_2)$) is 
definable in $\F_2$ by a result of Nielsen \cite{Nielsen}.

\begin{proposition}
Let $\bar{v}$ be a non trivial tuple of elements in a non abelian free group $\F$ of rank at least $3$. Then $Aut(\F).\bar{v}$ 
is not definable.
\end{proposition}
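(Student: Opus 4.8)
The plan is to argue by contradiction, suppose $Aut(\F).\bar v$ is definable, say by a formula $\psi(\bar x)$ with parameters $\bar d$ from $\F$. The orbit $O := Aut(\F).\bar v$ is visibly invariant under every automorphism of $\F$. Since $\F$ is homogeneous and no non-trivial type over $\emptyset$ is isolated (by \cite[Theorem 4.8]{TowersLPS}), it suffices to show that $O$ must in fact be $\emptyset$-definable — this would contradict the non-isolation result, because a $\emptyset$-definable orbit would isolate the (non-trivial) type of $\bar v$ over $\emptyset$.

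To force $O$ to be $\emptyset$-definable, first I would pass to the canonical parameter of the definable set $O$. Work in a saturated extension $\mathbb{F}$ of $\F$, and let $\mathfrak e \in \mathbb{F}^{eq}$ be the canonical parameter of $O$ (i.e. the class of $\bar d$ under the $\emptyset$-definable equivalence relation ``$\psi(\cdot,\bar y_1)$ and $\psi(\cdot,\bar y_2)$ define the same set''). Because $O$ is $Aut(\F)$-invariant, $\mathfrak e$ is fixed by every automorphism of $\F$; more strongly, since the defining formula is preserved under all automorphisms of the ambient model that fix $O$ setwise, one checks $\mathfrak e$ is fixed by $Aut(\mathbb{F})$ as well, so $\mathfrak e \in dcl^{eq}(\emptyset)$ once we know any automorphism can be used. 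The key input is weak elimination of imaginaries: by the Corollary following Theorem \ref{0Elim}, in $\mathbb F^{we}$ we have $\mathfrak e \in dcl^{eq}(\bar b)$ and $\bar b \in acl^{eq}(\mathfrak e)$ for a finite tuple $\bar b$ of elements of the basic sorts. If $\mathfrak e \notin dcl^{eq}(\emptyset)$, then $\bar b \notin acl^{eq}(\emptyset)$, so at least one coordinate $\beta$ of $\bar b$ lies in a basic sort and is not in $acl^{eq}(\emptyset)$; in particular $\beta \notin \F^{we}_2$-type situations can be arranged, and $\beta$ has infinite orbit under $Aut(\F)$ by Corollary \ref{corMalOrb} (writing $\F = \F_2 * \langle e\rangle * \cdots$, or rather applying the corollary with $\F$ a non-trivial free product, which holds since $\rk \F \geq 3$ so $\F = \F' * \mathbb Z$ with $\F'$ non-abelian). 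But $\mathfrak e$ is $Aut(\F)$-invariant and $\bar b \in acl^{eq}(\mathfrak e)$, so $\bar b$, hence $\beta$, has finite orbit under $Aut(\F)$ — contradiction. Therefore $\mathfrak e \in dcl^{eq}(\emptyset)$, and unwinding, $O = \psi(\mathbb F, \bar d)$ is already $\emptyset$-definable (it equals the set defined by ``$\exists \bar y\, (\text{$\bar y$ is a code for $\mathfrak e$} \wedge \psi(\bar x, \bar y))$'' with $\mathfrak e$ now $\emptyset$-definable).

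Having obtained a $\emptyset$-definable set equal to $Aut(\F).\bar v$, I would conclude: the isolated (indeed, $\emptyset$-definable) set $O$ is precisely the realizations in $\F$ of a complete type — it is a single $Aut(\F)$-orbit, and by homogeneity of $\F$ (free groups are homogeneous, cf.\ \cite{OuldHoucineHomogeneity}, or \cite{TowersLPS}) a single orbit is exactly the set of realizations of $\tp(\bar v/\emptyset)$. Thus $\tp(\bar v/\emptyset)$ is isolated, contradicting \cite[Theorem 4.8]{TowersLPS} since $\bar v$ is non-trivial. This completes the proof.

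The step I expect to be the main obstacle is the bookkeeping around which automorphisms fix the canonical parameter and the correct application of Corollary \ref{corMalOrb}: one must be careful that the finite tuple $\bar b$ of basic-sort imaginaries coming from weak EI can be taken outside $\F_2^{we}$ in the relevant sense, and that the hypothesis ``$\F$ is a non-trivial free product'' of that corollary is genuinely met (needing $\rk \F \geq 2$, and non-abelianity of a free factor which forces the use of two elements $a,c$ with $C(a)\neq C(c)$). One also needs the passage ``$\mathfrak e$ fixed by $Aut(\F)$ $\Rightarrow$ $\mathfrak e \in dcl^{eq}(\emptyset)$'' which uses that $\F$ is not just homogeneous but that its automorphism group acts with the relevant transitivity on $acl^{eq}$-classes — this is exactly where Corollary \ref{corMalOrb}'s ``every non-trivial canonical parameter can be moved'' slogan does the work, and making that rigorous for all four basic sorts simultaneously (via the finite tuple $\bar b$) is the delicate point. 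Everything else is routine manipulation in $G^{we}$ and standard facts about imaginaries.
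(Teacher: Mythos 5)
Your proof is correct in substance and follows essentially the same route as the paper's: take the canonical parameter, use weak elimination of imaginaries into the basic sorts, apply Corollary \ref{corMalOrb} to move any non-trivial basic-sort coordinate of the associated finite tuple, and invoke Theorem 4.8 of \cite{TowersLPS} together with homogeneity to rule out $\emptyset$-definability of the orbit. The only flaw is the non-load-bearing aside claiming $\mathfrak{e}$ is fixed by the full automorphism group of the saturated elementary extension --- that is false in general, since such an automorphism need not preserve $\F$ and hence need not preserve the orbit setwise --- but your actual derivation of $\mathfrak{e}\in dcl^{eq}(\emptyset)$ goes through the finite-versus-infinite orbit dichotomy and does not rely on it.
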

\begin{proof}
Suppose it is, and let $\mathfrak{e}=[\bar{a}]_E$ be the canonical parameter for the definable set $X:= Aut(\F).\bar{v}$, in $\F^{eq}$. 
By Theorem \ref{0Elim}, $\abs{R_E(\bar{a},\F^{eq})}$ is finite and by Theorem 4.8 in \cite{TowersLPS}, it contains a non trivial tuple, 
thus by Lemma \ref{InfOrb} there is an automorphism such that $f(\mathfrak{e})\neq \mathfrak{e}$, but then $f(X)\neq X$, a contradiction.  
 \end{proof}

\section{Abelian interpretable groups in torsion-free hyperbolic groups}\label{AbIntGr}

We show that if a formula $\phi$ over $\F_n^{eq}$ ``gains'' an element in $\F_{n+1}^{eq}$, 
i.e. $\phi(\F_n^{eq})\neq\phi(\F_{n+1}^{eq})$, then it cannot be given definably the structure of an abelian group. 
In the case of a torsion-free hyperbolic group not elementarily equivalent to a free group, we prove an analogous result 
for the elementary core.

\begin{lemma}\label{conj}
Let $G := F_0 * X_0 * X_1$ be a free product of groups and let $h$ be an automorphism of $G$ fixing $F_0$ pointwise and acting
on $X_0*X_1$ as an automorphism of prime order $p$ whose fixed point set is exactly $X_0$.

Suppose that $h$ fixes the conjugacy class of a cyclically reduced element $a\in G\setminus (F_0*X_0 \cup X_0*X_1)$, of the form
\[a=a_1x_1a_2x_2\cdots a_mx_m \mbox{ with }a_i\in F_0 \setminus \{1\} \textrm{ and } x_i\in X_0*X_1 \setminus \{1\} \textrm{ for } i=1,\ldots m.\]

Then there is a permutation $\sigma\in \langle (1.....m)\rangle\leq Sym(\{1,\ldots m\})$
such that:
\begin{enumerate}
\item[(i)] $a_i=a_{\sigma(i)}$ and $h(x_i)=x_{\sigma(i)}$;
\item[(ii)] $p$ divides the order $o(\sigma)$ of $\sigma$.
\end{enumerate}
\end{lemma}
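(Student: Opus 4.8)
The plan is to analyze how the automorphism $h$ acts on the cyclically reduced normal form of $a$ with respect to the free product decomposition $G = F_0 * (X_0 * X_1)$. Since $h$ fixes $F_0$ pointwise and maps $X_0 * X_1$ to itself, for any $g \in G$ the element $h(g)$ is again in normal form with the $F_0$-syllables unchanged and the $(X_0*X_1)$-syllables replaced by their $h$-images; in particular $h(a) = a_1 h(x_1) a_2 h(x_2) \cdots a_m h(x_m)$ is cyclically reduced of the same syllable length $m$. The hypothesis says $h(a)$ is conjugate to $a$, so by the standard conjugacy criterion in free products (two cyclically reduced elements of syllable length $>1$ are conjugate iff one is a cyclic permutation of the other), $h(a)$ is obtained from $a$ by a cyclic rotation of its syllables. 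This rotation is given by some power of the cycle $(1\,2\,\cdots\,m)$, call it $\sigma^{-1}$ or $\sigma$ after fixing conventions; matching syllables type-by-type (the $F_0$-syllables among themselves, the $(X_0*X_1)$-syllables among themselves) gives $a_i = a_{\sigma(i)}$ and $h(x_i) = x_{\sigma(i)}$, which is item (i).

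For item (ii), the idea is to iterate. Since $h$ has order $p$, applying the relation $h(x_i) = x_{\sigma(i)}$ repeatedly gives $h^k(x_i) = x_{\sigma^k(i)}$ for all $k$, and in particular $x_i = h^p(x_i) = x_{\sigma^p(i)}$. If $p$ did not divide $o(\sigma)$, then since $o(\sigma)$ divides $m$ and... more to the point, if $o(\sigma) = d$ then $\sigma^p$ has order $d/\gcd(d,p)$, which equals $d$ when $p \nmid d$; so $\sigma^p$ still acts with the same orbits as $\sigma$ on $\{1,\dots,m\}$ when $p \nmid d$, hence the relation $x_i = x_{\sigma^p(i)}$ forces all syllables in a $\sigma$-orbit to be equal. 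Then I would use this to show $h$ restricted to the subgroup generated by the relevant $x_i$ is trivial (or more precisely, that all the $x_i$ lie in $X_0$, the fixed set of $h$): from $h(x_i) = x_{\sigma(i)} = x_i$ whenever syllables in an orbit coincide. But if all $x_i \in X_0$, then $a \in F_0 * X_0$, contradicting the hypothesis that $a \notin F_0 * X_0 \cup X_0 * X_1$. Therefore $p \mid o(\sigma)$.

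Let me be a little more careful about the last contradiction, since that is where I expect the main subtlety. Knowing $x_i = x_{\sigma^p(i)}$ for all $i$ does not immediately say $x_i$ is $h$-fixed; what it gives is that the function $i \mapsto x_i$ is constant on $\sigma^p$-orbits. When $p \nmid o(\sigma)$, the $\sigma^p$-orbits coincide with the $\sigma$-orbits (same partition of $\{1,\dots,m\}$), so $i\mapsto x_i$ is constant on $\sigma$-orbits, and then $h(x_i) = x_{\sigma(i)} = x_i$, i.e. every $x_i$ is fixed by $h$, hence $x_i \in X_0$ since $X_0$ is exactly the fixed-point set of $h$ on $X_0*X_1$. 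This yields $a \in F_0 * X_0$, the desired contradiction. The one genuine thing to verify carefully is the conjugacy criterion for free products and the bookkeeping that the cyclic rotation must preserve the alternation pattern, so that it is realized by an honest power of the $m$-cycle and matches $F_0$-syllables to $F_0$-syllables and $(X_0*X_1)$-syllables to $(X_0*X_1)$-syllables; the fact that $m \geq 1$ and $a$ has the displayed alternating form makes this automatic. I would also note the edge case where the rotation could a priori shift by a half-period and swap the two syllable types — but that cannot happen because the $F_0$-syllables and $(X_0*X_1)$-syllables occupy positions of a fixed parity pattern in the length-$2m$ reduced word, so only rotations by an even number of letters (equivalently, powers of the $m$-cycle on syllable-pairs) preserve the structure. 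That settles the proof.
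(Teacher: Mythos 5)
Your proof is correct and follows essentially the same route as the paper: read off the cyclic-shift relation from the conjugacy criterion for cyclically reduced elements in a free product to get (i), then combine the relation $h(x_i)=x_{\sigma(i)}$ with the hypothesis that $\mathrm{Fix}(h)=X_0$ and $a\notin F_0*X_0$ to get (ii). The paper derives (ii) by noting $h^{o(\sigma)}$ fixes every $x_i$ and hence must be trivial, while you run the dual computation with $h^p=\mathrm{id}$ and the orbits of $\sigma^p$; the two are interchangeable versions of the same two-line argument.
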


\begin{proof} Since $h$ is the identity on $F_0$ and leaves $X_0*X_1$ invariant and because
$a$ is cyclically reduced and conjugate to $h(a)$,
we see that \[h(a)=a_1h(x_1)a_2h(x_2)\ldots a_mh(x_m)\] differs from
$a$ by a cyclic shift only. This implies $(i)$.

Now $(ii)$ follows from $(i)$: we see that $h^{o(\s)}$ fixes each of the $x_i$. 
The elements of $X_0*X_1$ fixed by $h$ (and thus by any element generating $\langle h \rangle$) 
are exactly those of $X_0$. Thus $h^{o(\s)}$ must be the identity so $o(h)=p$ divides $o(\s)$.
\end{proof}

We will need the following small lemma:

\begin{lemma}\label{cyclic}
Let $M$ be a finite cyclic group, let $s,t\in M$ and let $d>1$.
Then there are $k,l$ 
with $s^k=t^l$ and such that $d$ does not divide both $k$ and $l$.
\end{lemma}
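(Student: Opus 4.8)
The plan is to analyze the structure of $M$ directly. Write $M = \langle g \rangle$ with $|M| = N$, and write $s = g^a$, $t = g^b$ for some integers $a,b$. The equation $s^k = t^l$ becomes $g^{ak} = g^{bl}$, i.e. $ak \equiv bl \pmod{N}$. So I need integers $k,l$, not both divisible by $d$, with $ak \equiv bl \pmod N$. Two obvious solutions are $(k,l) = (N,0)$ — since $s^N = 1 = t^0$ — and symmetrically $(k,l) = (0,N)$. If $d \nmid N$, then already $(k,l) = (N,0)$ works, since $d \nmid N = k$. So the only remaining case is $d \mid N$.

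In the case $d \mid N$, I would instead look for a nonzero solution with $k$ coprime to $d$, or simply pick $k=1$. Take $k = 1$: then I need $l$ with $bl \equiv a \pmod N$. Such an $l$ need not exist for arbitrary $a,b$ (if $\gcd(b,N) \nmid a$), so $k=1$ does not always work. Instead, the cleanest route: let $e = \gcd(b,N)$. The values of $bl \bmod N$ range exactly over the multiples of $e$. So I want to choose $k$ with $d \nmid k$ such that $ak$ is a multiple of $e$ modulo $N$; then an appropriate $l$ exists and, since $e \mid N$, I can further adjust $l$ modulo $N/e$ to avoid $d \mid l$ — or note that if forced, I already have $d \nmid k$. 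Concretely, choose $k = N/\gcd(a,N) \cdot (\text{something})$... Actually the simplest uniform choice: set $k$ to be the multiplicative order-related quantity $N/\gcd(a, N)$ if this is not divisible by $d$; then $ak \equiv 0$, so $l = 0$ works and $d \nmid k$. The one subcase where this fails is when $d \mid N/\gcd(a,N)$, i.e. $\gcd(a,N) \le N/d$; but then symmetrically consider whether $d \mid N/\gcd(b,N)$, and if that also fails, use a combined argument.

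Here is the argument I expect to actually use, which sidesteps the case juggling. Since $d > 1$, pick a prime $q \mid d$. I claim there exist $k, l$ with $s^k = t^l$ and $q \nmid k$ (which is stronger than $d$ not dividing both). Consider the subgroup $H = \langle s^q \rangle \cap \langle t \rangle$ versus $\langle s \rangle \cap \langle t \rangle$: these are nested subgroups of the finite cyclic group $M$. If $\langle s \rangle \cap \langle t \rangle = \langle s^q \rangle \cap \langle t \rangle$, then in particular any element of $\langle s \rangle \cap \langle t \rangle$ lies in $\langle s^q\rangle$, and iterating, $\langle s\rangle \cap \langle t\rangle \subseteq \bigcap_{j} \langle s^{q^j}\rangle = 1$ (as $M$ is finite, the powers $s^{q^j}$ eventually stabilize to a subgroup $S$ with $S = \langle s^q \cdot (\text{gen of }S)\rangle$, forcing... ) — hmm, more carefully: if every element common to $\langle s\rangle$ and $\langle t\rangle$ lies in $\langle s^q\rangle$, then $t^{|M|/\gcd} \in \langle s^q \rangle$... this needs a clean finish. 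The cleanest: $\langle s \rangle \cap \langle t \rangle \ne 1$ always (it contains the identity, but I want nontrivial only if $s,t$ have order $>1$; if $s$ or $t$ is trivial, take $k$ or $l$ appropriately to be $1$ with the other $0$, giving $d \nmid 1$). Assuming both nontrivial, let $z \ne 1$ lie in $\langle s \rangle \cap \langle t \rangle$, say $z = s^k = t^l$. If $q \nmid k$ we are done. If $q \mid k$, write $k = qk'$; then $z = (s^q)^{k'}$, and since $\langle s^q \rangle$ is a proper subgroup of $\langle s \rangle$ (as $q \mid |s|$, which we may assume, else $s^q$ generates the same group and we simply relabel $s$ by $s^q$ and repeat — a process that terminates as orders strictly decrease)...

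I will state the proof via a descent on $|M|$ or on the order of $s$. \textbf{Main obstacle.} The genuinely delicate point is the bookkeeping when $d \mid N$ and neither $N/\gcd(a,N)$ nor $N/\gcd(b,N)$ avoids the factor $q$; here one must combine the two relations $(N,0)$ and a nontrivial one. I expect the slick resolution is: among the pairs $(k,l)$ with $s^k = t^l$ and $(k,l) \ne (0,0)$ modulo orders, the set of attainable $k$ forms a subgroup of $\Z$, namely $(N_s/e)\Z$ where $e$ relates the two cyclic subgroups; it suffices that this subgroup is not contained in $q\Z$, equivalently that $N_s/e$ is coprime-ish to $q$, and an easy order count (using that $\langle s\rangle \cap \langle t\rangle$ has order $e > 1$ when both are nontrivial and share a common subgroup — which they do, as $M$ is cyclic so its subgroup lattice is totally ordered) forces this. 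Writing this last count carefully is the one step I would not do by hand-waving.
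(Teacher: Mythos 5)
Your proposal does not close. After several abandoned starts you arrive at the right object --- the set of exponents $k$ with $s^k\in\langle t\rangle$ is the subgroup $(o(s)/e)\Z$ of $\Z$, where $e=|\langle s\rangle\cap\langle t\rangle|$ --- but you explicitly defer the decisive step (``the one step I would not do by hand-waving''), namely checking that a prime $q\mid d$ cannot divide both $o(s)/e$ and $o(t)/e$. That check is the entire content of the lemma, and it is exactly what the paper's two-line proof supplies: write $o(s)=ef$ and $o(t)=eg$ with $f,g$ coprime (take $e=\gcd(o(s),o(t))$); then $s^f$ and $t^g$ both have order $e$, hence generate the same subgroup of the cyclic group $M$, so $s^f=t^{gj}$ and $t^g=s^{fi}$ for some $i,j$; since $d>1$ cannot divide both of the coprime integers $f$ and $g$, one of the pairs $(k,l)=(f,gj)$ or $(fi,g)$ has the required property. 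Without this coprimality observation your argument has no conclusion, and none of your earlier attempts (the pairs $(N,0)$ and $(0,N)$, the choice $k=1$, the descent) is brought to a point where it yields the statement.

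Two auxiliary claims in your sketch are also false and should be deleted: the subgroup lattice of a finite cyclic group is \emph{not} totally ordered (it is the divisor lattice of $|M|$; in $\Z/6\Z$ the subgroups of orders $2$ and $3$ are incomparable), and consequently $\langle s\rangle\cap\langle t\rangle$ can be trivial even when $s$ and $t$ are both nontrivial. Neither error is fatal to the intended argument (if $e=1$ then $f=o(s)$ and $g=o(t)$ are themselves coprime and the pairs $(o(s),0)$, $(0,o(t))$ suffice), but they confirm that the ``easy order count'' you postponed was not actually in hand.
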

\begin{proof}
Write $o(s) = ef$, and $o(t) = eg$ where $f, g$ are coprime. Then $s^f$ and $t^g$ have order $e$, hence generate the same subgroup of $M$. So each can be written as a power of the other and the conclusion follows.
\end{proof}

\begin{lemma}\label{conj2}

Let $G := U_f * W * U_g$ be a free product of groups. Let $f$ and $g$  be automorphisms of $G$ of prime order $p$ such that $f$ 
is the identity on $W$ and on $U_f$ and acts without nontrivial fixed point on $U_g$, and $g$ is the identity on 
$W$ and on $U_g$ and acts without nontrivial fixed point on $U_f$.
If $a$ is an element of $G$ whose conjugacy class is fixed both by $f$ and by $g$, then $a$ lies in 
$W*U_f$ or in $W*U_g$.
\end{lemma}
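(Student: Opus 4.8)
The plan is to reduce to Lemma~\ref{conj} by showing that if the conjugacy class of $a$ is fixed by both $f$ and $g$ but $a$ lies in neither $W*U_f$ nor $W*U_g$, then $a$ contains occurrences of nontrivial elements from both $U_f$ and $U_g$ in its normal form with respect to the free product decomposition $G=U_f*W*U_g$. First I would put $a$ in cyclically reduced form and group the syllables so that $a$ is written as an alternating product of nontrivial elements, where the ``$W$-parts'' play the role of the $F_0$-syllables and the ``$U_f\cup U_g$-parts'' play the role of the $X_0*X_1$-syllables; to apply Lemma~\ref{conj} directly we would set $F_0 := W$, $X_0 := U_f$, $X_1 := U_g$ for the action of $g$ (which is trivial on $W$ and on $U_g=X_1$... here one has to be a little careful about which factor is the fixed set). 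Concretely, for the automorphism $g$, which fixes $W*U_g$ pointwise and acts freely on $U_f$, Lemma~\ref{conj} with $F_0 = W*U_g$ is not literally in the stated form, so instead I would massage $a$ into the shape required: write $a$ cyclically reduced as $a = w_1 u_1 w_2 u_2 \cdots w_m u_m$ where each $u_i$ is a nontrivial syllable from $U_f$ and each $w_i$ is a (possibly trivial, but generically nontrivial) element of $W*U_g$, provided $a$ genuinely involves $U_f$; then the hypotheses of Lemma~\ref{conj} are met for $g$ with $F_0 = $ ``the $W*U_g$-syllables'' and $X_0*X_1 = U_f$-words. This yields a cyclic permutation $\sigma_g$ of prime order divisible by $p$ permuting the $U_f$-syllables, and symmetrically a cyclic permutation $\sigma_f$ of order divisible by $p$ permuting the $U_g$-syllables.

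Next, the key combinatorial point: in the cyclically reduced normal form of $a$, the $U_f$-syllables and $U_g$-syllables are interleaved in some cyclic pattern, and a single cyclic shift of $a$ must simultaneously realize both $\sigma_f$ (on the $U_g$-positions) and $\sigma_g$ (on the $U_f$-positions) — because the conjugations witnessing $f(a)\sim a$ and $g(a)\sim a$ are by cyclic shifts of the whole word $a$. So there is one cyclic shift by some amount $r$ which, restricted to $U_f$-positions, has order divisible by $p$, and restricted to $U_g$-positions, also has order divisible by $p$. I would then extract a contradiction from Lemma~\ref{cyclic}: the shift by $r$ acting on the full set of syllable positions is a single cyclic permutation of the cyclic group $\Z/N\Z$ (where $N$ is the total number of syllables), and its restrictions to the two complementary (cyclically interleaved) subsets of positions are powers of the generator; Lemma~\ref{cyclic} says one cannot force $p$ to divide the order of both restrictions unless... — more precisely, I would set up $s,t$ to be the generators of the two restricted cyclic actions and derive that $p$ divides neither, or arrange the indices so that a single shift cannot have its order divisible by $p$ on both orbits, contradicting that both $\sigma_f,\sigma_g$ have order divisible by $p$ coming from the \emph{same} shift.

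The main obstacle I anticipate is precisely this bookkeeping about a \emph{single} cyclic shift inducing two permutations with prescribed divisibility, and making Lemma~\ref{cyclic} bite: one has to track carefully that the cyclic shift of $a$ by $r$ letters induces, on the sub-sequence of $U_f$-syllables, the cyclic permutation $\sigma_g$, and on the $U_g$-syllables the permutation $\sigma_f$, and that the ``step sizes'' of these two induced permutations inside $\Z/N\Z$ are governed by how the two types interleave. Once that is pinned down, Lemma~\ref{cyclic} (applied in $M=\Z/N\Z$ or an appropriate quotient, with $s,t$ the two step generators and $d=p$) gives exponents $k,l$ with $s^k=t^l$ and $p\nmid k$ or $p\nmid l$, which contradicts that the corresponding shift realizes order-$p$-divisible permutations on both subsets. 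A secondary, more routine obstacle is the degenerate case analysis when some $w_i$ is trivial or when $a$ is not already cyclically reduced in the finer $U_f*W*U_g$ decomposition — but these should be handled just as in the proof of Lemma~\ref{conj}. The conclusion is that no such $a$ exists, i.e. $a\in W*U_f$ or $a\in W*U_g$.
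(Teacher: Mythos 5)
Your plan has a genuine gap, concentrated exactly where you yourself flag the ``main obstacle''. First, the claim that a \emph{single} cyclic shift simultaneously realizes $\sigma_f$ on the $U_g$-positions and $\sigma_g$ on the $U_f$-positions is false: the conjugation witnessing $f(a)\sim a$ and the one witnessing $g(a)\sim a$ are in general two \emph{different} cyclic shifts, so there is no one amount $r$ doing both jobs. Second, and more seriously, the contradiction you hope to extract does not exist at the combinatorial level: it is perfectly consistent for two (powers of) cyclic permutations to both have order divisible by $p$ (e.g.\ both equal to an $m$-cycle with $p\mid m$), so Lemma~\ref{cyclic} cannot be made to say ``$p$ cannot divide the order of both restrictions''. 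Lemma~\ref{cyclic} only hands you exponents $k,l$ with $\sigma^k=\tau^l$ and $p$ not dividing both $k$ and $l$; the contradiction must then be manufactured by going back to the group elements and using the fixed-point hypotheses on $f$ and $g$ --- a step entirely absent from your sketch.

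The device you are missing is to use \emph{one asymmetric decomposition} rather than two symmetric ones. Write $a=a_1x_1\cdots a_mx_m$ cyclically reduced with $a_i\in U_f*W\setminus\{1\}$ and $x_i\in U_g\setminus\{1\}$ (possible once $a\notin(W*U_f)\cup(W*U_g)$). Apply Lemma~\ref{conj} to $f$ with the decomposition $(U_f*W)*1*U_g$ and to $g$ with the decomposition $U_g*W*U_f$ (note that, contrary to your worry, both are literally instances of Lemma~\ref{conj}; for $f$ one takes $X_0=1$). Because the same syllable pattern is used twice, both applications yield permutations $\sigma,\tau\in\langle(1\ldots m)\rangle$ of the \emph{same} index set, with $a_i=a_{\sigma(i)}$, $f(x_i)=x_{\sigma(i)}$, $g(a_i)=a_{\tau(i)}$, $x_i=x_{\tau(i)}$, and no interleaving bookkeeping is needed. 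Now Lemma~\ref{cyclic} gives $k,l$ with $\sigma^k=\tau^l$ and $p$ not dividing both; then $f^k(x_i)=x_{\sigma^k(i)}=x_{\tau^l(i)}=x_i$ with $x_i\neq 1$ in $U_g$ forces $f^k=\mathrm{id}$, hence $p\mid k$ and so $p\nmid l$; finally $g^l(a_i)=a_{\tau^l(i)}=a_{\sigma^k(i)}=a_i$ with $p\nmid l$ forces each $a_i$ into the fixed set $W$ of $\langle g\rangle$ on $W*U_f$, so $a\in W*U_g$, the desired contradiction. As written, your proposal neither reaches this configuration nor identifies the group-theoretic step that closes the argument.
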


\begin{proof} 
Suppose that $a\notin (W*U_f)\cup (W*U_g)$.
We may choose $a=a_1x_1a_2x_2\cdots a_mx_m$ cyclically reduced with $a_i\in U_f*W , x_i\in U_g$, and $ a_i\neq 1\neq x_i$ for $i=1,\ldots m$. 
By Lemma~\ref{conj} applied to $f$ and the decomposition $G= (U_f * W) * 1 * U_g$ on the one hand, and to $g$ and the 
decomposition $G = U_g * W * U_f$ on the other hand
we find $\s,\tau \in\langle(1\ldots m)\rangle$
such that for $i=1,\ldots m$ we have

\[a_i=a_{\s(i)} \mbox{ and\ } f(x_i)=x_{\s(i)}\]
\[g(a_i)=a_{\tau(i)} \mbox{ and\ } x_i=x_{\tau(i)}.\]

By Lemma~\ref{cyclic} we can choose $k,l$ 
with $\s^k=\tau^l$ such that $p$ does not divide both $k$ and $l$.

For $i=1,\ldots m$ we now have
\[f^k(x_i) = x_{\s^k(i)} = x_{\tau^l(i)} = x_i,\]
\[g^l(a_i) = a_{\tau^l(i)} = a_{\s^k(i)} = a_i.\]
Thus $f^k$ is the identity (and so $p$ divides $k$)
and $g^l$ fixes each $a_i$. Since $p$ does not divide $l$,
we conclude that $a_i\in W$ for $i=1,\ldots m$ showing that $a\in W*U_g$. This
contradiction proves the lemma.
\end{proof}

\begin{lemma}\label{basic}
Let $H:=G*C$ be a torsion-free hyperbolic group. Let $f$ be an automorphism of $H$ of 
prime order $p>2$ that fixes $G$ and acts on $C$ without non-trivial fixed point. 

Suppose $\beta\in H^{we}\setminus G^{we}$ is not a conjugacy class, i.e. $\beta$ is not in $S_1(H)$. 
Then $|\{\beta,f(\beta),$ $f^2(\beta),\ldots,f^{p-1}(\beta)\}|=p$.  
\end{lemma}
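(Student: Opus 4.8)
The plan is to argue by contradiction: if the orbit $\{\beta, f(\beta),\ldots,f^{p-1}(\beta)\}$ does not have size $p$, then since $p$ is prime and $\langle f\rangle$ has order $p$, the orbit must have size $1$, i.e. $f(\beta)=\beta$. I would then show that $f(\beta)=\beta$ together with $\beta\in H^{we}\setminus G^{we}$ and the hypothesis that $\beta$ is not a conjugacy class forces a contradiction with the structure of the basic sorts $S_{2_m}$, $S_{3_m}$, $S_{4_{m,n}}$. The point is that an element of one of the coset or double-coset sorts records (up to the relevant subgroup actions) an actual coset or double-coset representative living in $H$, and $f$ acts on $H=G*C$ in a very controlled way: it is the identity on $G$ and acts with no nontrivial fixed point on $C$, hence with no nontrivial fixed point on any conjugate of $C$ by an element of $G$, while fixing $G$ pointwise.

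First I would set up normal forms with respect to the free product decomposition $H=G*C$. Write a representative of $\beta$ in reduced form $b_1c_1b_2c_2\cdots$ alternating between $G\setminus\{1\}$ and $C\setminus\{1\}$, exactly as in the proof of Lemma~\ref{MalOrb} but with the role of $\langle e\rangle$ played by $C$. Since $f$ fixes $G$ pointwise and maps $C$ to $C$, we have $f(b_1c_1b_2c_2\cdots)=b_1 f(c_1) b_2 f(c_2)\cdots$, which is again reduced of the same syllable length. If $\beta$ is a coset $[(a,b)]_{E_{2_p}}$ (or $E_{3_p}$) with $C_H(b)=\langle b\rangle$, then $f(\beta)=\beta$ means $b$ is fixed (as a generator of its centralizer) and $f(a)a^{-1}\in\langle b^p\rangle$ — but a comparison of normal forms, using that $f$ only alters the $C$-syllables and fixes each $G$-syllable, will pin down that $f$ acts trivially on the $C$-syllables of $a$ and on $b$, forcing the relevant piece to lie in $G$, contradicting $\beta\notin G^{we}$. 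The double-coset case $[(b,c,d)]_{E_{4_{m,n}}}$ is handled the same way: $f(\beta)=\beta$ gives $\gamma f(c)\epsilon=c$ for some $\gamma\in\langle b^m\rangle$, $\epsilon\in\langle d^n\rangle$, and since $f$ fixes $b,d$ (after checking they cannot genuinely involve $C$ in a way that $f$ moves) we reduce to $f(c)$ being a bounded two-sided translate of $c$; normal-form bookkeeping then shows $f$ must fix $c$ syllable-by-syllable, hence fix all the $C$-syllables, forcing them to be trivial and $\beta\in G^{we}$.

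The main obstacle, I expect, is the careful case analysis needed to rule out "spurious" fixedness coming from the ambiguity in the representatives: an element of a coset or double-coset sort is only determined modulo powers of a centralizer generator, so $f(\beta)=\beta$ does not immediately say $f$ fixes a chosen representative, only that it sends it to another representative of the same class. I would handle this precisely as in Lemma~\ref{MalOrb}: if a representative genuinely involves $C$ (syllable length $>1$, or length $1$ with a nontrivial $C$-syllable), then any two representatives of the same class differ by an element normalizing/centralizing the relevant cyclic groups, which in a torsion-free hyperbolic group are themselves cyclic and "thin"; comparing the reduced forms of $f(b)$ and of $b$ (or of a bounded translate of $b$) then shows the discrepancy can only be absorbed if $f$ acts trivially on the $C$-part. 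Since $f$ has no nontrivial fixed point on $C$ (nor on conjugates of $C$ by $G$-elements, as these are again free factors in suitable decompositions), this forces the $C$-syllables to be trivial, so $\beta$ actually lies in $G^{we}$ — the desired contradiction. Finally, because $p$ is prime the orbit has size $1$ or $p$, and we have excluded size $1$, so $|\{\beta,f(\beta),\ldots,f^{p-1}(\beta)\}|=p$. (The hypothesis $p>2$ is used to guarantee, via Lemma~\ref{conj} type arguments, that $f$ acting without nontrivial fixed point on $C$ cannot coincide with a short cyclic shift; it is also what makes the conjugacy-class sort genuinely exceptional and hence legitimately excluded from the statement.)
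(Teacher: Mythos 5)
Your proposal follows essentially the same route as the paper: reduce by primality (orbit--stabilizer) to excluding $f^k(\beta)=\beta$ for $k\not\equiv 0\bmod p$, then compare normal forms in $G*C$, using that $f$ fixes $G$ pointwise, permutes the $C$-syllables, and that the ambiguity in the coset and double-coset representatives is controlled by the (cyclic) centralizers. The one concrete point that is off is your explanation of the hypothesis $p>2$: it has nothing to do with ``short cyclic shifts'' or with Lemma \ref{conj}, and it is not what excludes the conjugacy sort (that sort is excluded by hypothesis precisely because the lemma is false for it). Rather, $p>2$ is needed at exactly the step you gloss over when you write that ``$b$ is fixed (as a generator of its centralizer)''. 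From $f^k(\beta)=\beta$ you only get $C_H(f^k(b))=C_H(b)=\langle b_0\rangle$, so $f^k$ restricts to an automorphism of the infinite cyclic group $\langle b_0\rangle$ and hence $f^k(b)=b^{\pm1}$. The fixed-point hypothesis rules out $f^k(b)=b$ for $b\in H\setminus G$, but to rule out $f^k(b)=b^{-1}$ you must note that it would give $f^{2k}(b)=b$, which contradicts the fixed-point hypothesis only because $2k\not\equiv 0\bmod p$, i.e.\ only because $p$ is odd. With that observation supplied, your argument goes through and coincides with the paper's (which likewise treats only the double-coset case in detail and leaves the coset sorts to the same bookkeeping).
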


\begin{proof}
We first note that no non-trivial power of $f$ fixes an element in $H\setminus G$. This also implies that $f^i(d)\neq d^{-1}$ for all $d \in H\setminus G$, otherwise we would have $f^{2i}(d) = d$. We now take cases according to the list of basic sorts.

We will only treat the case where $\beta=[(b_1,b_2,b_3)]_{E_{4_{k,l}}}$. Suppose for the sake of contradiction that 
$(f^i(b_1),f^i(b_2),f^i(b_3))\sim_{E_{4_{k,l}}}(f^j(b_1),f^j(b_2),f^j(b_3))$ for $i\not\equiv j \mod p$. So we have $[f^i(b_1),f^j(b_1)]=1$ and 
$[f^i(b_3),f^j(b_3)]=1$. Since $H$ is a torsion-free hyperbolic group we have  $f^{i-j}(b_1)=b_1$ or $f^{i-j}(b_1)=b_1^{-1}$. But since
$f^{i-j}$ cannot invert or fix an element in $H\setminus G$, it follows that $b_1\in G$ and  similarly $b_3\in G$. 
Now since $\beta\in H^{we}\setminus G^{we}$, we must have $b_2\in H\setminus G$. But then
$\gamma f^i(b_2)\epsilon = f^j(b_2)$ for some $\gamma,\epsilon\in G$, and an easy calculation 
shows that this is not possible.

\end{proof}

We will apply  Lemma \ref{basic} to the group  $H:=G*\F_p$ where $G$ is torsion-free hyperbolic, 
$p>2$ is a prime and $f$ is the automorphism of $H$ that fixes $G$ and cyclically permutes $(e_1,\ldots,e_p)$.

\begin{proposition}\label{LT}
Let $X$ be a definable set in $\F_n^{eq}$. Suppose $X(\F_{n+1}^{eq})\neq X(\F_n^{eq})$. Then
$X$ cannot be given definably the structure of an abelian group.
\end{proposition}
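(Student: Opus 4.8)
The plan is to argue by contradiction. Suppose $X$ carries a definable abelian group operation $+$. The parameters defining $X$ already lie in $\F_n^{eq}$ (this is forced by the very formulation of the hypothesis $X(\F_{n+1}^{eq})\neq X(\F_n^{eq})$), and by replacing $n$ with a larger index if necessary — the hypothesis persists along the elementary chain of Proposition \ref{HypChain}, as one sees by transporting a witness along the automorphism swapping two free generators — we may assume that $+$ is defined over $\F_n^{eq}$ as well. Then $X(\F_m^{eq})$ is a subgroup of $X$ for every $m\geq n$, and we fix $\mathfrak e\in X(\F_{n+1}^{eq})\setminus X(\F_n^{eq})$.

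First I would isolate inside $\mathfrak e$ a basic-sort datum with infinite orbit. Writing $\F_{n+1}=\F_n*\langle e\rangle$, choose a word-tuple $\bar w(\bar a_0,x)$ over $\F_n$ with $\mathfrak e=[\bar w(\bar a_0,e)]_E$ for the relevant $\emptyset$-definable equivalence relation $E$. By Theorem \ref{Elim}, $\mathfrak e$ and the canonical finite solution set of $R_E(\bar w(\bar a_0,e),-)$ are interdefinable; since $\mathfrak e\notin\F_n^{eq}$, this set contains a basic-sort element $\beta\in(\F_n*\langle e\rangle)^{we}\setminus\F_n^{we}$, and $\beta\in acl^{eq}(\mathfrak e)$. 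By Corollary \ref{InfOrb}, $\beta$ — and hence $\mathfrak e$, and likewise every $\F_n$-conjugate of $\mathfrak e$ — has infinite orbit under the group of automorphisms of $\F_{n+1}$ fixing $\F_n$ pointwise; explicitly under $\langle f_a^i,f_c^i:i\in\Z\rangle$ for any $a,c\in\F_n$ with $C(a)\neq C(c)$ (such $a,c$ exist, $\F_n$ being non-abelian). In particular $\mathfrak e\notin acl^{eq}(\F_n)$.

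Now I would bring in symmetry. Fix a prime $p>2$ and work in $\F_{n+p}=\F_n*\langle e_1,\dots,e_p\rangle$ with the order-$p$ automorphism $f$ fixing $\F_n$ and cyclically permuting $e_1,\dots,e_p$ — exactly the situation of Lemma \ref{basic} (with $G=\F_n$, $C=\langle e_1,\dots,e_p\rangle$); since $f$ fixes $\F_n^{eq}$ pointwise, it is an automorphism of $(X,+)$. The elements $\mathfrak e_i:=[\bar w(\bar a_0,e_i)]_E$ of $X$ lie over the pairwise $\F_n$-independent generic elements $e_i$ (recall that the $e_i$ form a Morley sequence in the generic type over $\F_n$), hence are pairwise distinct — $\mathfrak e_i=\mathfrak e_j$ would force $\mathfrak e$, and then $\beta$, into $acl^{eq}(\F_n)$ against the previous paragraph — and $f$ cyclically permutes $\{\mathfrak e_1,\dots,\mathfrak e_p\}$. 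By commutativity $\mathfrak s:=\mathfrak e_1+\dots+\mathfrak e_p\in X$ is fixed by $f$, indeed by every permutation of $e_1,\dots,e_p$. On the other hand $\mathfrak s\notin acl^{eq}(\F_n)$: otherwise the automorphisms of $\F_{n+p}$ fixing $\F_n$ and $e_2,\dots,e_p$ and moving only $e_1$ would carry $\mathfrak s$ through finitely many values, hence — as they fix $\mathfrak e_2,\dots,\mathfrak e_p$ — carry $\mathfrak e_1=\mathfrak s-\mathfrak e_2-\dots-\mathfrak e_p$ through finitely many values, contradicting the infinite orbit of $\beta\in acl^{eq}(\mathfrak e_1)$ furnished by Corollary \ref{InfOrb}.

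The contradiction should now come from understanding which imaginaries can be $f$-invariant. Applying weak elimination of imaginaries to $\mathfrak s$ together with Lemma \ref{basic}, the basic-sort data of $\mathfrak s$ that can survive outside $\F_n^{we}$ must either come from the conjugacy-class sort or be permuted by $f$ in orbits of length $p$; Lemma \ref{conj} pins down the $f$-invariant conjugacy classes outside $\F_n^{we}$ to a rigid, rotationally symmetric normal form in the decomposition $\F_n*\langle e_1,\dots,e_p\rangle$. Carrying out the construction again over a second, disjoint block $\langle e_{p+1},\dots,e_{2p}\rangle$ with its own order-$p$ permutation $g$, so that $\mathfrak s$ is the symmetric sum over both blocks and is invariant under $f$, $g$ and the block-swap, Lemma \ref{conj2} confines the surviving conjugacy classes to a single block — incompatible with the block-symmetry of $\mathfrak s$, once one passes to canonical parameters of the finite $\langle f,g\rangle$-orbits to turn the length-$p$-orbit phenomenon into genuine invariance. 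The upshot is $\mathfrak s\in acl^{eq}(\F_n)$, contradicting the previous paragraph; the analogous statement for the elementary core of a torsion-free hyperbolic group not elementarily equivalent to a free group runs the same way, using the chain of Proposition \ref{HypChain} in place of the free-group one. I expect this last step to be the main obstacle: Lemma \ref{basic} alone does not suffice, since the conjugacy-class sort genuinely admits $f$-invariant classes not coming from $\F_n$ (for instance $[a_0e_1a_0e_2\cdots a_0e_p]$), and the real work is the normal-form bookkeeping in the iterated free-product decomposition that lets Lemmas \ref{conj} and \ref{conj2}, played against the extra symmetries of $\mathfrak s$, eliminate those possibilities.
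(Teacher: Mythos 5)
Your construction follows the paper's strategy up to the last step: symmetrize an element of $X(\F_{n+1}^{eq})\setminus X(\F_n^{eq})$ by $\odot$-summing over a block of $p$ new generators cyclically permuted by an order-$p$ automorphism $f$, use commutativity to make the sum $f$-invariant, kill the non-conjugacy-class part of its weak-elimination data by Lemma \ref{basic}, and attack the conjugacy classes with Lemmas \ref{conj} and \ref{conj2}. But the step you yourself flag as the main obstacle is a genuine gap, and your proposed fix does not close it. With only two blocks, Lemma \ref{conj2} tells you that each conjugacy class in the finite solution set $R_E(\bar e,\cdot)$ attached to $\mathfrak s$ lies in $(\F_n*A_0)^{eq}$ or in $(\F_n*A_1)^{eq}$; the block-swap then simply permutes these classes between the two blocks, which is perfectly consistent with the invariance of $\mathfrak s$ and of the solution set as a whole. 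Nothing forces the data into $\F_n^{eq}$, so there is no contradiction with $\mathfrak s\notin acl^{eq}(\F_n)$: the sum over two blocks is genuinely algebraic over conjugacy classes living outside $\F_n^{eq}$, and "block-symmetry" is not violated. A secondary problem: you fix only $p>2$, so an $f$-orbit of length $p$ inside the solution set is possible, and your workaround (replacing such an orbit by its canonical parameter) leaves the basic sorts, outside the reach of Lemmas \ref{basic} and \ref{conj2}.

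The paper's resolution is quantitative rather than symmetry-based. From Theorem \ref{Elim} one gets a uniform bound: $R_E(\bar a,\cdot)$ has at most $s$ solutions, each a tuple with $l$ imaginary coordinates, so the weak canonical parameter of any element of the sort consists of at most $l\cdot s$ basic imaginaries. One then chooses $p>\max\{s,2\}$ (so every $f_i$-orbit in a solution set is a singleton, fixing the secondary problem above) and uses $l\cdot s+1$ disjoint blocks $A_0,\dots,A_{ls}$, forming $\epsilon=\gamma_0\odot\cdots\odot\gamma_{ls}$ where each $\gamma_i$ is the symmetrized sum over block $A_i$. By construction $\epsilon$ does not lie in $G_j^{eq}$ for any sub-product $G_j$ omitting a block. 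Lemma \ref{basic} and repeated use of Lemma \ref{conj2} confine each of the at most $l\cdot s$ conjugacy classes in $R_E(\bar e,G^{eq})$ to a single block $(\F_n*A_i)^{eq}$; by pigeonhole they jointly omit some block, so $\epsilon$ is algebraic over (indeed lies in) the $eq$ of a proper elementary substructure $G_j$, contradicting the previous sentence. You would need to import this counting — more blocks than the bound on the number of weak canonical parameters — to complete your argument; the two-block swap cannot substitute for it.
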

\begin{proof}
Suppose otherwise and let $(X,\odot)$ be an abelian group. Suppose
$X$ is a subset of sort $S_E$ (for some $\emptyset$-definable equivalence relation $E$). By Theorem \ref{Elim} we can assign 
to $E$ a definable equivalence relation $R_E$ such that $R_E(\bar{a},\bar{y})$ cannot have more than $s$ solutions 
(for any $\bar{a}$) and each solution is a tuple containing $l$-many elements in imaginary sorts.

Let $p$ be a prime greater than $max\{s,2\}$. Let $A_i:=\langle e_{n+ip+1},\ldots, e_{n+(i+1)p}\rangle$ for $i\leq l\cdot s$ 
and $f_i$ be the automorphism of $\F_n*A_i$ which is the identity on $\F_n$ and cyclically permutes the given basis of $A_i$.

Now let $\beta_0\in X(\F_{n+1}^{eq})\setminus X(\F_n^{eq})$ and $\beta_i$ be the image of $\beta_0$ under 
the automorphism of $\F_{\omega}:=\langle e_1,e_2,\ldots,e_n,\ldots\rangle$ 
which exchanges $e_{n+1}$ with $e_{n+ip+1}$ and fixes all other elements of the basis. 

For each $0\leq i\leq s\cdot l$ we consider the following product of elements of $X(\F_{\omega}^{eq})$.
$$ \beta_i\odot f_i(\beta_i)\odot\ldots\odot f_i^{p-1}(\beta_i)=\gamma_i$$
Is not hard to see that $\gamma_i$ is an element of $(\F_n*A_i)^{eq}\setminus \F_n^{eq}$. Since $(X,\odot)$ is abelian 
we have $f_i(\gamma_i)=\gamma_i$. 

Finally we consider the product 

$$\gamma_0\odot\gamma_1\odot\ldots\odot\gamma_{l\cdot s}=\epsilon$$ 

Let $G:=\F_n*A_0*A_1*\ldots*A_{ls}$ and $G_j:=\F_n*B_j$, where $B_j$ is a free product 
with strictly less than $l\cdot s+1$ distinct free factors from the list $\{A_0,\ldots,A_{l\cdot s}\}$. Then is not hard to see that 
$\epsilon$ is an element in $G^{eq}\setminus \bigcup_j G_j^{eq}$.  
Note that since all the $\gamma_i$ are fixed by each $f_j$ (or rather by the obvious extension of $f_j$ to $\F_{\omega}$), 
we have $\epsilon$ is fixed by each $f_j$. 

Let $\epsilon=[\bar{e}]_E$, and suppose that $R_E(\bar{e},G^{eq})$ contains an element (in a tuple) which 
is not a conjugacy class and lives in $G^{eq} \setminus \F_n^{eq}$. Then by Lemma \ref{basic} this element  
has $p$ distinct images by the powers of some $f_i$, a contradiction. Thus, we may assume that all elements 
in the tuples of the solution set $R_E(\bar{e},G^{eq})$ that live in $G^{eq}\setminus \F_n^{eq}$ are 
conjugacy classes. By repeatedly applying Lemma \ref{conj2}, we see that each 
conjugacy class is an element in $(\F_n*A_i)^{eq}$ for some $0\leq i\leq l\cdot s$, but since there are less 
than $l\cdot s$ conjugacy classes this contradicts the fact that $\epsilon$ is an element in 
$G^{eq}\setminus \bigcup_j G_j^{eq}$.
\end{proof}

\begin{proposition}\label{HLT}
Let $G$ be a torsion-free hyperbolic group not elementarily equivalent to a free group. Let $X$ be a set definable 
in $EC(G)^{eq}$ such that $X(EC(G)^{eq})\neq X((EC(G)*\mathbb{Z})^{eq})$. Then $X$ cannot be given definably 
the structure of an abelian group.
\end{proposition}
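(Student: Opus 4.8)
The plan is to run the proof of Proposition~\ref{LT} essentially verbatim, with $EC(G)$ playing the role of $\F_n$ and $EC(G)*\mathbb{Z}$ that of $\F_{n+1}$. Three facts make this substitution legitimate. First, by Proposition~\ref{HypChain} we have the elementary chain
\[EC(G)\prec EC(G)*\mathbb{Z}\prec\cdots\prec EC(G)*\F_k\prec\cdots,\]
and more generally $EC(G)*B\prec EC(G)*\F$ whenever $B$ is a free factor of the free group $\F$. Second, $EC(G)$ is itself a torsion-free hyperbolic group (an elementary subgroup of $G$, non-cyclic since $G$ is not elementarily equivalent to a free group), so Theorems~\ref{Elim} and~\ref{0Elim} apply to it; and $EC(G)*\F_p$ is torsion-free hyperbolic for every prime $p$, so Lemma~\ref{basic} applies to it with ``$G$'' taken to be $EC(G)$ (or any of the larger free products appearing below) and ``$C$'' a free factor $\cong\F_p$. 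Third, by Theorem~\ref{HypGen} the type $tp(e/EC(G))$ of a free generator is generic, so the basis elements $e_1,e_2,\ldots$ of $\F_\omega$ behave over $EC(G)$ exactly as they do over a free group.

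Granting this, here are the steps. Suppose for contradiction that $X\subseteq S_E$ (with $E$ a $\emptyset$-definable equivalence relation, $|\bar x|=m$) carries a definable abelian group structure $(X,\odot)$. By Theorem~\ref{Elim} fix a relation $R_E$ over $EC(G)$ with at most $s$ solutions for each parameter, each solution a tuple with $l$ imaginary coordinates from basic sorts. Fix a prime $p>\max\{s,2\}$, set $A_i:=\langle e_{ip+1},\ldots,e_{(i+1)p}\rangle$ for $0\leq i\leq ls$, and let $f_i$ be the automorphism of $EC(G)*A_i$ fixing $EC(G)$ pointwise and cyclically permuting the chosen basis of $A_i$ (so $f_i$ has order $p$ and $\Fix(f_i|_{A_i})=1$). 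Pick $\beta_0\in X((EC(G)*\langle e_1\rangle)^{eq})\setminus X(EC(G)^{eq})$, let $\beta_i$ be its image under the automorphism of $EC(G)*\F_\omega$ swapping $e_1$ and $e_{ip+1}$, put $\gamma_i:=\beta_i\odot f_i(\beta_i)\odot\cdots\odot f_i^{p-1}(\beta_i)$, which lies in $(EC(G)*A_i)^{eq}\setminus EC(G)^{eq}$ and is fixed by $f_i$ because $\odot$ is commutative, and finally $\epsilon:=\gamma_0\odot\gamma_1\odot\cdots\odot\gamma_{ls}$. Writing $G':=EC(G)*A_0*\cdots*A_{ls}$ and letting $G'_j:=EC(G)*B_j$ range over the free products with $B_j$ a free product of strictly fewer than $ls+1$ of the $A_i$, one checks --- exactly as in Proposition~\ref{LT}, using the genericity statement of Theorem~\ref{HypGen} --- that $\epsilon\in (G')^{eq}\setminus\bigcup_j (G'_j)^{eq}$, while $\epsilon$ is fixed by every $f_j$.

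To close the argument, write $\epsilon=[\bar e]_E$. Since $\epsilon$ is $f_j$-invariant, the solution set $R_E(\bar e,(G')^{eq})$ is an $f_j$-invariant set of size $\leq s<p$; as $f_j$ has prime order $p$, every orbit has size $1$, so each solution tuple, hence each of its coordinates, is fixed by every $f_j$. A real coordinate then lies in $\bigcap_j\Fix(f_j)=EC(G)$, and is harmless. An imaginary coordinate lying in $(G')^{eq}\setminus EC(G)^{eq}$: if it is not a conjugacy class, Lemma~\ref{basic}, applied to a free factor $A_i$ it genuinely involves, would give it $p$ distinct images under the powers of $f_i$, contradicting $f_i$-invariance; so it must be a conjugacy class, and then repeated application of Lemma~\ref{conj2} (to pairs $f_{i_1},f_{i_2}$ and the corresponding decompositions $G'=A_{i_1}*W*A_{i_2}$) shows it belongs to $(EC(G)*A_i)^{eq}$ for a single $i$. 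Since there are at most $ls$ such conjugacy classes but $ls+1$ factors $A_i$, some $A_{i_0}$ is untouched, so all solutions of $R_E(\bar e,\cdot)$ lie in $(EC(G)*B)^{eq}=(G'_{j_0})^{eq}$ with $B$ the free product of the $A_i$, $i\neq i_0$; hence $\epsilon\in dcl^{eq}(R_E(\bar e,\cdot))\subseteq (G'_{j_0})^{eq}$ (the inclusion because $(G'_{j_0})^{eq}\prec (G')^{eq}$), contradicting $\epsilon\notin\bigcup_j(G'_j)^{eq}$.

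The only real work is verifying that nothing in the proof of Proposition~\ref{LT} secretly uses that the base is a \emph{free} group rather than an arbitrary torsion-free hyperbolic one: Lemma~\ref{basic} and Theorems~\ref{Elim},~\ref{0Elim} are already stated in the required generality, and the single place where freeness of $\F_n$ entered --- producing the elementary chain and the genericity of new generators --- is supplied by Proposition~\ref{HypChain} and Theorem~\ref{HypGen}. The two assertions ``$\gamma_i\notin EC(G)^{eq}$'' and ``$\epsilon\notin\bigcup_j(G'_j)^{eq}$'' inherit their justification from the proof of Proposition~\ref{LT} --- ultimately the fact that $(\beta_i)_i$ is, over $EC(G)$, a Morley sequence in the generic type --- so I expect no genuinely new obstacle.
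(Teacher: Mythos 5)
Your proposal is correct and follows exactly the route the paper takes: the paper's own proof of Proposition~\ref{HLT} consists of the single remark that one runs the proof of Proposition~\ref{LT} verbatim with $EC(G)$ in place of $\F_n$, citing Proposition~\ref{HypChain}. Your additional checks (that Theorems~\ref{Elim}, \ref{0Elim} and Lemma~\ref{basic} already apply in this generality, and that Theorem~\ref{HypGen} supplies the genericity of the new generators) are precisely the verifications the paper leaves implicit.
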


\begin{proof}
The proof is identical to the proof of Proposition \ref{LT} replacing $\F_n$ by $EC(G)$ and using Proposition~\ref{HypChain}.
\end{proof}

\section{The generic type is foreign to any interpretable abelian group}\label{Generic}

In this section we bring everything together in order to prove Theorem \ref{For}. Before we start we give a brief account of 
the stability theoretic tools we use. For a more thorough exposition the reader is referred to \cite{PillayStability}. 
Let us fix a complete stable theory $T$, countable if you wish, 
and a very saturated model $\mathbb{M}$ of $T$. $\mathcal{M},\mathcal{N},..$ denote small elementary submodels, and $A,B,..$ small subsets.  
We repeat a definition and fact from Chapter 7 of \cite{PillayStability}. See Definition 7.4.1 and 7.4.7 there (originally due to Hrushovski). 

\begin{definition} Let $p(x)\in S(A)$ be a stationary type and $\Sigma(y)$ a partial type over some small set $B$ of parameters.
\newline
(i) We say that $p$ is foreign to $\Sigma$ if for any model $\mathcal{M}$ containing $A\cup B$, any realization $a$ of $p|\mathcal{M}$ 
(the nonforking extension of $p$ over $\mathcal{M}$) and any realization $b$ of $\Sigma$, $a$ is independent from $b$ over $\mathcal{M}$.
\newline
(ii) We say that $p$ is internal to $\Sigma$ if for some $\mathcal{M}$ containing $A\cup B$, and some realization $a$ of 
$p|\mathcal{M}$ there is a tuple $c$ of realizations of $\Sigma$ such that $a\in dcl(\mathcal{M},c)$.
\end{definition} 

\begin{fact}\label{Inter}
Suppose $G$ is a connected definable group, defined over a set $A$, and let $p(x)\in S(A)$ be the generic type of $G$. 
Suppose that $\Sigma(y)$ is a partial type over some small set $B$ of parameters and that $p$ is not foreign to $\Sigma$. 
Then there is a normal $A\cup B$-definable subgroup $N$ of $G$ such that the generic type (over $A\cup B$) of $G/N$ is internal to $\Sigma$.
\end{fact}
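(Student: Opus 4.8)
The plan is to reproduce the standard argument, due to Hrushovski and recorded around Definition~7.4.1 and Fact~7.4.7 of \cite{PillayStability}; I describe its shape, and it will in fact yield $N$ \emph{proper} (which is the content one wants). We work in the monster model $\mathbb{M}$ and freely use the forking calculus and the theory of generic types of stable groups. First I would reduce to a model base: replacing $A$ by (the universe of) a small model $\mathcal{M}\supseteq A\cup B$ is harmless, since $G$ is connected and hence $p$ has a unique, generic, extension $p|\mathcal{M}$ over $\mathcal{M}$; and it is enough to find a proper normal definable subgroup $N$ \emph{over $\mathcal{M}$} such that the generic of $G/N$ is internal to $\Sigma$. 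Indeed, by the Baldwin--Saxl chain condition in the stable group $G$, the intersection $N^{*}$ of all $\Aut(\mathbb{M}/A\cup B)$-conjugates of such an $N$ equals a finite subintersection, hence is definable; it is defined over $A\cup B$, it is normal, it is contained in $N$ and so is proper, and the generic of $G/N^{*}$ maps to a tuple of generics of the quotients $G/N^{\tau}$, each internal to $\Sigma$ (an automorphism over $A\cup B$ fixes $\Sigma$), so it is itself internal to $\Sigma$.

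Next, using the hypothesis that $p|\mathcal{M}$ is not foreign to $\Sigma$, I would produce a non-algebraic imaginary internal to $\Sigma$ that is controlled by a generic element. Fix $a\models p|\mathcal{M}$ and a finite tuple $\bar b$ of realizations of $\Sigma$ with $a$ not independent from $\bar b$ over $\mathcal{M}$, and set $e:=\mathrm{Cb}(\tp(\bar b/\mathcal{M}a))$. Since $\tp(\bar b/\mathcal{M}a)$ forks over $\mathcal{M}$, we have $e\notin acl^{eq}(\mathcal{M})$; on the other hand $e\in acl^{eq}(\mathcal{M}a)$, and $e$ lies in the definable closure of a Morley sequence of $\tp(\bar b/\mathcal{M}a)$, every term of which realizes $\Sigma$ because $\Sigma$ is a partial type over $\mathcal{M}$. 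Hence $\tp(e/\mathcal{M})$ is non-algebraic and internal to $\Sigma$, and, passing to a code for the finite set of $\mathcal{M}a$-conjugates of $e$, we may arrange $e\in dcl^{eq}(\mathcal{M}a)$. In particular the $\Sigma$-internal part $I$ of $\tp(a/\mathcal{M})$ — the largest quotient of $a$ over $\mathcal{M}$ whose type over $\mathcal{M}$ is internal to $\Sigma$ — is non-algebraic over $\mathcal{M}$.

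The group-theoretic heart then produces $N$ as a kernel. By the binding-group / internality machinery (see \cite{PillayStability}), the left-translation action of $G$ on its generic induces a definable action of $G$ ``over $\Sigma$'' on the realizations of $\tp(I/\mathcal{M})$, that is, a definable homomorphism $\rho\colon G\to\Gamma$ into the binding group $\Gamma=\Aut(\tp(I/\mathcal{M})/\Sigma)$, which is interpretable and internal to $\Sigma$. Put $N:=\ker\rho$. Then $N$ is normal; $N$ is definable, being a fibre of the definable map $\rho$; and $N$ is a proper subgroup of $G$, since if $\rho$ were trivial then $I$ would be fixed by all translates of the generic and hence algebraic over $\mathcal{M}$, contradicting the previous paragraph. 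Finally $G/N$ is definably isomorphic to the definable subgroup $\rho(G)$ of the $\Sigma$-internal group $\Gamma$, so $G/N$, and in particular its generic, is internal to $\Sigma$; the descent of the first paragraph then gives such an $N$ over $A\cup B$, as required.

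I expect the genuine obstacle to lie in the third paragraph: setting up the definable ``action over $\Sigma$'' $\rho\colon G\to\Gamma$ and checking that $\ker\rho$ is at once normal, definable, and proper. An alternative route, perhaps closer to the original, uses the stabilizer calculus instead: since $a\models p|\mathcal{M}$ is not independent from $e$ over $\mathcal{M}$, the type $q:=\tp(a/\mathcal{M}e)$ is a forking extension of the generic, hence is the generic type of a coset of $\Stab(q)$, which is a proper subgroup of $G$, the code of this coset lying in $dcl^{eq}(\mathcal{M}e)$ and therefore in the $\Sigma$-internal closure of $\mathcal{M}$; but one still has to pass from the (possibly non-normal, merely type-definable) group $\Stab(q)$ to a definable normal subgroup, which is the same difficulty. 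By comparison the forking computations of the second paragraph and the Baldwin--Saxl descent are routine.
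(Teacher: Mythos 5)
First, a point of comparison: the paper does not prove this statement at all. It is imported verbatim as a ``Fact'' from Chapter 7 of \cite{PillayStability} (Definition 7.4.1 and 7.4.7, originally due to Hrushovski), so there is no in-paper argument to measure yours against; what follows measures it against the standard proof you are trying to reconstruct. Your outer layers are fine: the Baldwin--Saxl descent from a subgroup over $\mathcal{M}$ to one over $A\cup B$ is correct (the normal core of the family of $\Aut(\mathbb{M}/A\cup B)$-conjugates is a finite subintersection, hence definable over $A\cup B$, normal, proper, and its quotient's generic embeds into a product of $\Sigma$-internal quotients), and the extraction of a non-algebraic $\Sigma$-internal imaginary $e\in dcl^{eq}(\mathcal{M}a)\setminus acl^{eq}(\mathcal{M})$ via $\mathrm{Cb}(\mathrm{stp}(\bar b/\mathcal{M}a))$ and a code for its finite set of conjugates is exactly the standard equivalence ``not foreign iff some nonalgebraic internal imaginary is definable from a realization.''

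The gap is in your third paragraph, and it is the entire content of the Fact. Writing $e=f(a)$, left translation does \emph{not} induce an action of $G$ on the realizations of $\tp(e/\mathcal{M})$: the value $f(ga)$ is not a function of $g$ and $f(a)$, since two generics $a,a'$ with $f(a)=f(a')$ may have $f(ga)\neq f(ga')$. Hence there is no canonical homomorphism $\rho\colon G\to\Aut(\tp(e/\mathcal{M})/\Sigma)$ to take the kernel of, and ``$N$ is a fibre of the definable map $\rho$'' assumes what must be proved. The actual argument works with the \emph{germs} on $p|\mathcal{M}$ of the maps $a\mapsto f(ga)$: one checks that $g\sim g'$ iff these germs agree is a right-translation-invariant equivalence relation, so that $N_0:=[1]_\sim$ is a subgroup whose right cosets are the classes; one then still has to (a) pass to the normal core of $N_0$ (again via the chain condition) because $N_0$ itself need not be normal, (b) upgrade from type-definable to definable, and (c) verify that the generic of the resulting quotient is $\Sigma$-internal, which requires identifying the cosets with tuples of values $f(ga_i)$ along a Morley sequence. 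Your fallback via $\Stab(\tp(a/\mathcal{M}e))$ runs into the same three issues, as you note. So the proposal correctly isolates where the difficulty sits but does not carry out the step that makes the statement true; as written it is an outline rather than a proof.
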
  

We now specialize to torsion-free hyperbolic groups. Let $G$ be a non-cyclic torsion-free hyperbolic group and 
$\mathcal{M}\models Th(G)$. We write $p^G_{0}|\mathcal{M}$ for the unique nonforking extension of $p^G_{0}$ over $\mathcal{M}$. 

\begin{theorem}\label{InterAb}
The generic type of a non-cyclic torsion-free hyperbolic group is foreign to any interpretable abelian group.
\end{theorem}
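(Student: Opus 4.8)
The plan is to reduce Theorem \ref{InterAb} to the combinatorial-group-theoretic results already established (Propositions \ref{LT} and \ref{HLT}) via Fact \ref{Inter}, using the definable simplicity of $G$ (Corollary \ref{DS}) to control the normal subgroup that Fact \ref{Inter} produces.

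\begin{proof}
Suppose toward a contradiction that $p^G_0$ is not foreign to some interpretable abelian group $(A,\cdot)$. We work in a saturated model $\mathbb{M}\models Th(G)$; let $\Sigma(y)$ be the partial type defining (the universe of) $A$ over a small set $B$. Since $T:=Th(G)$ is connected with generic type $p^G_0$ (by Theorem \ref{HypGen} in the non-free case, and by Poizat's connectedness together with the fact that $\F_\omega$ is connected in the free case), and since $p^G_0$ is not foreign to $\Sigma$, Fact \ref{Inter} yields a normal $B$-definable subgroup $N$ of $G$ (the group being defined over $\emptyset$, say, in the sort of $\mathbb{M}$) such that the generic type of $G/N$ over $B$ is internal to $\Sigma$. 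By Corollary \ref{DS}, $Th(G)$ is definably simple, so either $N=\mathbb{M}$, in which case $G/N$ is trivial and cannot be internal to an abelian group in any nontrivial way (its generic type is the type of the identity, contradicting that internality should reflect the failure of foreignness, i.e.\ this case forces $p^G_0$ to actually be foreign to $\Sigma$), or $N=1$, so that $G/N=\mathbb{M}$ itself and we conclude that $p^G_0$ is internal to $\Sigma$.

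So $p^G_0$ is internal to the interpretable abelian group $A$. Internality gives, over some model $\mathcal{M}$, a finite tuple $\bar c$ of realizations of $\Sigma$ (i.e.\ elements of $A$) with a realization $a$ of $p^G_0|\mathcal{M}$ satisfying $a\in dcl^{eq}(\mathcal{M},\bar c)$. The key point is to transport this to the concrete free-product models of Proposition \ref{LT} (respectively Proposition \ref{HLT}). Using the elementary chains $\F_n\prec\F_{n+1}\prec\cdots$ (Theorem of Sela--Kharlampovich-Myasnikov) in the free case, or $EC(G)\prec EC(G)*\mathbb Z\prec\cdots$ (Proposition \ref{HypChain}) in the hyperbolic case, together with the fact that $tp(e_{n+1}/\F_n)$ (resp.\ $tp^{EC(G)*\langle e\rangle}(e/EC(G))$) is generic, one realizes $p^G_0$ by the new free generator $e_{n+1}$ over $\F_n$. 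Thus $e_{n+1}$ (or rather its $E$-class for the relevant $\emptyset$-definable equivalence relation realizing the relevant quotient of the internality witness), lies in $dcl^{eq}(\F_n^{eq},\bar c)$ for some tuple $\bar c$ from the interpretable abelian group $A$ evaluated in $\F_{n+1}$. The interpretable set underlying $A$ is then a definable set $X$ in $\F_n^{eq}$ (after absorbing the parameters into $\F_n$, which is possible since everything in sight is over $\F_n$) carrying definably an abelian group structure, and the internality forces $X(\F_{n+1}^{eq})\neq X(\F_n^{eq})$: otherwise $e_{n+1}\in dcl^{eq}(\F_n^{eq})$, contradicting that $e_{n+1}$ has generic, hence non-algebraic, type over $\F_n$. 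This directly contradicts Proposition \ref{LT} (resp.\ Proposition \ref{HLT}).

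\textbf{Main obstacle.} The delicate point is the reduction in the middle paragraph: turning the abstract internality/foreignness statement into the concrete statement ``a definable set carrying an abelian group structure gains a point between $\F_n^{eq}$ and $\F_{n+1}^{eq}$.'' Two things need care here. First, the interpretable abelian group $A$ and its internality witness are defined over a possibly large parameter set and over a large model $\mathcal{M}$; one must use genericity of $tp(e_{n+1}/\F_n)$ and the elementarity of the chain to descend to a situation defined over $\F_n$ (after conjugating/moving parameters so that the internality witness $\bar c$ is genuinely a tuple in $\F_{n+1}$, not merely in a saturated extension), which requires the non-forking/genericity bookkeeping to be done carefully — this is where compactness and the homogeneity of the $\F_n$ come in. Second, one should make sure that the relevant ``gained element'' is of the right form: it is not $a=e_{n+1}$ itself that matters but that some imaginary element of the abelian group sort, needed to define $e_{n+1}$, is new at level $n+1$; equivalently one argues contrapositively that if $X$ gained no new points then $dcl^{eq}(\F_n^{eq})$ would already contain $e_{n+1}$. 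Once this translation is set up cleanly, Propositions \ref{LT} and \ref{HLT} finish the job, and the remaining work (the free-group case versus the torsion-free hyperbolic case) is handled uniformly by substituting $EC(G)$ for $\F_n$ and invoking Proposition \ref{HypChain} in place of the elementary chain of free groups.
\end{proof}
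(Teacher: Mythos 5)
Your overall architecture is the same as the paper's: Fact \ref{Inter} plus definable simplicity (Corollary \ref{DS}) to upgrade ``not foreign'' to ``$p^G_0$ is internal to $A$'', then a descent to the concrete chain $\F_2\prec\F_3$ (resp.\ $EC(G)\prec EC(G)*\Z$) to produce a definable abelian group that gains a point, contradicting Proposition \ref{LT} (resp.\ \ref{HLT}). However, the step you yourself flag as the ``main obstacle'' is a genuine gap, and the tools you gesture at to close it are the wrong ones. The internality witness lives over an arbitrary (possibly very large) model $\mathcal{M}$ and the parameters of $A$ and of the definable function $f$ with $b=f(\bar c,m)$ live in $\mathcal{M}$; neither compactness nor homogeneity of $\F_n$ will move them into $\F_2$. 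Homogeneity (in the Perin--Sklinos sense) concerns $Aut(\F_n)$-orbits of real tuples with the same type over $\emptyset$, and says nothing about relocating the defining parameters of an interpretable group from a saturated model into $\F_2$ \emph{while keeping the relevant formula inside the generic type}. What actually does the work is the $\emptyset$-definability of $p^G_0$ (available because $Th(G)$ is connected and stable, so the generic type is $\emptyset$-invariant, hence $\emptyset$-definable): one packages the whole configuration into a single formula $\theta(x,m)$ saying ``$\psi(y,m)$ defines an abelian group and $x=f(\bar y,m)$ for some $\bar y$ satisfying $\psi(\cdot,m)$'', notes $\theta(x,m)\in p^G_0|\mathcal{M}$, and uses the defining schema over $\emptyset$ together with $\F_2\prec\mathcal{M}$ to find $m'\in\F_2$ with $\theta(x,m')\in p^G_0|\F_2$. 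Since $e$ realizes $p^G_0|\F_2$ inside the model $\F_3=\F_2*\langle e\rangle$, the witnesses $d_1,\dots,d_n$ for the existential quantifier can be taken in $\F_3^{eq}$, and then $e=f(\bar d,m')$ forces some $d_i\in B(\F_3^{eq})\setminus\F_2^{eq}$ (else $e\in dcl(\F_2)$), which is exactly the hypothesis of Proposition \ref{LT}. Without the definability-of-the-type argument your descent does not go through, so the proof as written is incomplete at its central step.

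A smaller point: your discussion of the case $N=\mathbb{M}$ in the first paragraph is muddled. If $N=G$ then $G/N$ is trivial and its generic type is algebraic, hence (vacuously) internal to anything, so there is no contradiction to extract there; the correct reading of Fact \ref{Inter} (Hrushovski's theorem) is that it produces a \emph{proper} normal definable subgroup with the quotient's generic internal to $\Sigma$, and definable simplicity then forces $N=1$, which is the case you actually use.
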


\begin{proof} Suppose $p^G_0$ is not foreign to some interpretable abelian group $A$. By Fact \ref{Inter} and Corollary \ref{DS}, 
there is a model $\mathcal{M}$ of $Th(G)$ (over which $A$ is defined), and a realization $b$ of $p^G_{0}|\mathcal{M}$ and a tuple $(c_{1},..,c_{n})$ 
of elements of $A$, such that $b\in dcl(\mathcal{M}, c_{1},..,c_{n})$. We may assume that 
$\mathcal{M}$ is an elementary extension of the model $\F_{2}$ (respectively $EC(G)$ 
in the case where $G\not\models T_{fg}$). Suppose  $b = f(c_{1},..,c_{n},m)$ where 
$f(-)$ is a partial $\emptyset$-definable function, and $m$ is a tuple from $\mathcal{M}$. We may assume $A$ is defined over $m$ too, 
by formula $\psi(y,m)$  (where of course $y$ is a variable from the appropriate imaginary sort). Then 
the formula  $\theta(x,m)$:  $``\psi(y,m)$ defines an abelian group" 
$\wedge$  ($\exists y_{1},..,y_{n}((\wedge_{i}\psi(y_{i},m)) \wedge x = f(y_{1},..,y_{n},m))$ is in $p^G_0|\mathcal{M}$. 
As $p^G_0|\mathcal{M}$ is definable over $\emptyset$, we can find $m'\in \F_{2}$ (respectively $EC(G)$) such that
$\theta(x,m')\in p^G_0|\F_2$ (respectively $p^G_0|EC(G)$), so is satisfied by $e$ in $\F_3:=\F_2*\langle e\rangle$ 
(respectively $EC(G)*\langle e\rangle$).  
The formula $\psi(y,m')$ then defines an abelian group, $B$ say, and there are elements $d_{1},..,d_{n}\in B$ 
such that $e = f(d_{1},...,d_{n},m')$. As $\F_2*\langle e\rangle$ (respectively $EC(G)*\langle e\rangle$) is a 
model containing $\F_{2}$ (respectively $EC(G)$) and $e$ we can find such
$d_{1},..,d_{n}\in B(\F_3^{eq})$ (respectively $(EC(G)*\Z)^{eq}$).  Hence  $e$ and $(d_{1},..,d_{n})$ are 
interdefinable over $\F_{2}$ (respectively $EC(G)$). 
So, for some $i\leq n$ we have $d_i\in B(\F_3^{eq})\setminus \F_2^{eq}$ (respectively $B((EC(G)*\Z)^{eq})\setminus EC(G)^{eq}$), 
contradicting Proposition \ref{LT} (respectively Proposition \ref{HLT}).
\end{proof}

\paragraph{Acknowledgements.} We wish to thank A. Ould Houcine for pointing out that our results on 
interpretable abelian groups and definable simplicity extend easily from 
non-abelian free groups to (non-cyclic) torsion-free hyperbolic groups. 
The third named author would like to thank Frank Wagner for some stimulating questions.

\providecommand{\bysame}{\leavevmode\hbox to3em{\hrulefill}\thinspace}
\providecommand{\MR}{\relax\ifhmode\unskip\space\fi MR }
\providecommand{\MRhref}[2]{%
  \href{http://www.ams.org/mathscinet-getitem?mr=#1}{#2}
}
\providecommand{\href}[2]{#2}

\end{document}